
\documentclass{article}

\usepackage{amsmath,amsthm,amssymb,latexsym,libertinus}

\newtheorem{theorem}{Theorem}[section]
\newtheorem{proposition}[theorem]{Proposition}
\newtheorem{lemma}[theorem]{Lemma}
\newtheorem{corollary}[theorem]{Corollary}
\newtheorem{remark}[theorem]{Remark}

\newtheorem{example}[theorem]{Example}
\newtheorem{assumption}[theorem]{Assumption}

\title{Rate estimates for total variation distance with applications\thanks{The 
author gratefully acknowledges the support of 
the National Research, Development and Innovation Office (NKFIH) through grant K 143529
and also within the framework of the Thematic Excellence Program 2021 (National Research subprogramme 
``Artificial intelligence, large networks, data security: mathematical foundation and applications'').
The author would like to thank the Isaac Newton Institute for Mathematical
Sciences (EPSRC Grant Number: EP/V521929/1) for the support and hospitality during the programme ``Diffusions in
machine learning: Foundations, generative models and non-convex optimisation''
where parts of this work have been presented and discussed.}}

\author{
Mikl\'os R\'asonyi\thanks{HUN-REN Alfr\'ed R\'enyi Institute of Mathematics and E\"otv\"os Lor\'and University, Budapest, 
Hungary.}} 

\date{\today}

\begin{document}

\maketitle{}

\begin{abstract}
{We present a Fourier-analytic method for estimating convergence rates in total variation distance in terms
of various metrics related to weak convergence. Applications are provided in
the areas of Malliavin calculus,
normal approximation and stochastic dynamical systems with memory.}
\end{abstract}

\noindent\textbf{Keywords:} probability metrics; Fourier transform; Malliavin calculus; 
stochastic dynamical systems with memory; normal approximation; weak convergence; total variation distance

\noindent\textbf{AMS 2020 Classification:} 60B10; 60E10; 60H07; 60F05

\section{Introduction}

Convergence rates in probability theory are often given by metrics related to weak convergence (Prokhorov,
Wasserstein, Fortet--Mourier, etc.). In this paper we propose a method to 
transfer such results to total variation distance estimates, under suitable moment- and
smoothness conditions on the random variables involved. 

Our main results are
presented in Section \ref{tot}: Lemma \ref{withrate} establishes a relationship between
convergence rates in Fortet-Mourier metric and those in total variation distance. These rates
depend, on one hand, on the existence of higher moments, on the other hand, on the 
smoothness of these random variables as measured by the tail decay of their characteristic functions.
If all moments exist and characteristic functions decay faster than any (negative) power function then
almost the same rate can be shown for total variation as for the Fortet-Mourier metric: this is the 
content of Corollary \ref{fourier2}.

In Section \ref{contra}, we show how this method applies to the ergodic theory of 
a subclass of so-called stochastically recursive sequences (see Chapter 3 of \cite{borovkov}).
We study stochastic difference equations driven by non-i.i.d. (coloured) noise.
It is well-known that such systems converge weakly to a limiting law under contractivity assumptions.
Using our tools from Section \ref{tot} we deduce total variation convergence at a geometric speed for
such systems in Theorem \ref{lime}.

In Section \ref{berry}, we revisit the multidimensional Berry--Esseen theorem and prove almost
optimal convergence rate in total variation in Theorem \ref{be}, under reasonable conditions.
Our result covers several cases which previous papers do not.

Section \ref{malliavin}
presents an application to sequences of smooth functionals in Malliavin calculus. 
We prove a slight strengthening of a result of \cite{bc2} in Theorem \ref{smoothsequence1} using a 
completely different argument, based on Corollary
\ref{fourier2}.

Finally, Lemma \ref{exx} in Section \ref{ramif} shows that, under even more stringent moment and
smoothness conditions one may find even stronger rate estimates. 

All proofs will be given in Section \ref{pofs}.
Our results open several further research directions. In particular, we are currently working on
providing convergence guarantees for certain machine learning algorithms (such as the stochastic gradient Langevin
dynamics, see \cite{6} and the references therein).
In this area, results are customarily given in Wasserstein distance, our purpose is to obtain
total variation estimates as well.

\section{Conditions for convergence in total variation}\label{tot}


We fix an integer $d\geq 1$ and work in the Euclidean space $\mathbb{R}^{d}$ with scalar product $\langle \cdot,\cdot\rangle$,{}
equipped with its Borel sigma-algebra $\mathcal{B}(\mathbb{R}^{d}$. For an $\mathbb{R}^{d}$-valued random variable $X$, its law will be denoted by $\mathcal{L}(X)$
and its characteristic function by
$$
\phi_{X}(u):=E[e^{i\langle u,X\rangle}],\ u\in\mathbb{R}^{d}.
$$

For each integer $m\geq 1$, the Euclidean norm of a vector $x\in\mathbb{R}^{m}$ will be denoted by $|x|$, where $m$ will always be clear from the context.
For a function $f:\mathbb{R}^{d}\to\mathbb{R}^{m}$ we denote $||f||_{\infty}:=\sup_{x\in\mathbb{R}^{d}}|f(x)|$. Again, $m$ will always be
given by the context.

We denote by $\mathcal{M}_{1}$ the set of continuously differentiable functions $f:\mathbb{R}^{d}\to\mathbb{R}$
such that $||f||_{\infty}+||\nabla f||_{\infty}\leq 1$. Note that $\nabla f:\mathbb{R}^{d}\to\mathbb{R}^{d\times d}$ in this
case. Following \cite{bally,bc1}, for arbitrary $\mathbb{R}^{d}$-valued Borel probability measures $\mu_{1},\mu_{2}$ 
we define their \emph{Fortet-Mourier distance} by 
$$
d_{FM}(\mu_{1},\mu_{2}):=\sup_{f\in\mathcal{M}_{1}}\left|\int_{\mathbb{R}^{d}}f(x)\mu_{1}(dx)-\int_{\mathbb{R}^{d}}f(x)\mu_{2}(dx)\right|.
$$
For $\mathbb{R}^{d}$-valued random variables $X_{1},X_{2}$ we will write $d_{FM}(X_{1},X_{2})$ when we indeed mean $d_{FM}(\mathcal{L}(X_{1}),
\mathcal{L}(X_{2}))$. 
The \emph{total variation distance} of $\mu_{1},\mu_{2}$ is
$$
d_{TV}(\mu_{1},\mu_{2}):=\sup_{|f|\leq 1}\left|\int_{\mathbb{R}^{d}}f(x)\mu_{1}(dx)-\int_{\mathbb{R}^{d}}f(x)\mu_{2}(dx)\right|,
$$
where the supremum ranges over measurable functions with absolute value at most one. Again, $d_{TV}(X_{1},X_{2})$ means
$d_{TV}(\mathcal{L}(X_{1}),
\mathcal{L}(X_{2}))$.

\begin{remark}{\rm The metric $d_{FM}$ is more often defined using Lipschitz functions, with $\mathcal{M}_{1}$ replaced by 
$$
\mathcal{N}_{1}:=\left\{f:\, ||f||_{\infty}\leq 1,\ \sup_{x\neq y}\frac{|f(x)-f(y)|}{|x-y|}\leq 1\right\},
$$
see e.g.\ page 210 of \cite{nourdin-peccati}. The two definitions are easily seen to be equivalent.
}
\end{remark}

Using characteristic functions for estimating total variation has been much used in the 
literature, see e.g.\ \cite{bobkov,bcg}. The next lemma is a novel contribution to this array of
techniques.

\begin{lemma}\label{withrate}
Let $X_{1}$, $X_{2}$ be random variables, let $\phi_{1},\phi_{2}$ denote the corresponding
characteristic functions. Assume that
\begin{equation}\label{fi}
|\phi_{i}(u)|\leq c_{\phi}(1+|u|)^{-d-\gamma},\quad i=1,2,	
\end{equation}
hold for some $c_{\phi},\gamma>0$ and also
\begin{equation}\label{ef}
E[|X_{i}|^{\delta}]\leq c_{f},\quad i=1,2,	
\end{equation}
for some $c_{f},\delta>0$. Then
\begin{equation}\label{alenyeg}
d_{TV}(X_{1},X_{2})\leq G d_{FM}^{g}(X_{1},X_{2})	
\end{equation}
holds for 
$$
g=\frac{\gamma\delta}{(d+1+\gamma)(d+\delta)}
$$
and for some constant $G>0$ which depends only on $d$, $\gamma$, $\delta$, $c_{\phi}$ and $c_{f}$. 
The random variables have respective density functions $f_{1},f_{2}$ and
\begin{equation}\label{sup}
\sup_{x\in\mathbb{R}^{d}}|f_{1}(x)-f_{2}(x)|\leq \bar{G} d_{FM}^{\bar{g}}(X_{1},X_{2})
\end{equation}
holds for $\bar{g}=\frac{\gamma}{(d+1+\gamma)}$ and for a suitable $\bar{G}$, depending only
on $d$, $\gamma$ and $c_{\phi}$.
\end{lemma}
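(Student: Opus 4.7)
The strategy is classical Fourier inversion combined with two truncation/optimization steps. Assumption (\ref{fi}) makes $\phi_i$ integrable, so each $X_i$ has a bounded continuous density $f_i$, and
$$
f_1(x) - f_2(x) = \frac{1}{(2\pi)^d}\int_{\mathbb{R}^d} e^{-i\langle u,x\rangle}\bigl(\phi_1(u) - \phi_2(u)\bigr)\,du.
$$
The key link to the Fortet--Mourier distance is that, for each fixed $u\in\mathbb{R}^d$, both $x\mapsto\cos\langle u,x\rangle$ and $x\mapsto\sin\langle u,x\rangle$ are $C^1$ with supremum norm $1$ and gradient norm bounded by $|u|$, so after division by $(1+|u|)$ they lie in $\mathcal{M}_1$; this gives
$$
|\phi_1(u) - \phi_2(u)| \leq 2(1+|u|)\,d_{FM}(X_1,X_2).
$$

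To obtain (\ref{sup}), I split the Fourier-inversion integral at $|u| = U$. On the tail $|u|>U$, the decay (\ref{fi}) yields $\int_{|u|>U}|\phi_1-\phi_2|\,du \leq C_1 U^{-\gamma}$; on the core $|u|\leq U$, the Lipschitz estimate above together with polar integration produces a bound of order $U^{d+1}d_{FM}(X_1,X_2)$. Balancing the two contributions by setting $U = d_{FM}^{-1/(d+1+\gamma)}$ yields (\ref{sup}) with $\bar g = \gamma/(d+1+\gamma)$.

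For (\ref{alenyeg}), I write $d_{TV}(X_1,X_2) = \tfrac12\int|f_1-f_2|\,dx$ and split at $|x| = R$. On $|x|\leq R$, I bound the integrand pointwise by (\ref{sup}) and use that the ball of radius $R$ has volume $c_d R^d$; on $|x|>R$, Markov's inequality applied to (\ref{ef}) gives $\int_{|x|>R}(f_1+f_2)\,dx \leq 2c_f R^{-\delta}$. This leads to an estimate of the form $d_{TV}(X_1,X_2) \leq C_2(R^d d_{FM}^{\bar g} + R^{-\delta})$, and optimizing in $R$ via $R = d_{FM}^{-\bar g/(d+\delta)}$ delivers the claimed exponent $g = \bar g \cdot \delta/(d+\delta) = \gamma\delta/[(d+1+\gamma)(d+\delta)]$.

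No individual step is truly difficult; the main point to be careful about is the sup-norm-versus-Lipschitz rescaling behind the bound on $|\phi_1-\phi_2|$, since the resulting linear factor $(1+|u|)$ in frequency is exactly what forces the exponent $d+1+\gamma$ in the denominator of $\bar g$. The remaining work is bookkeeping to ensure that every intermediate constant depends only on $d,\gamma,\delta,c_\phi,c_f$, as required.
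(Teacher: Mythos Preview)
Your proposal is correct and follows essentially the same route as the paper: bound $|\phi_1(u)-\phi_2(u)|$ via $\cos,\sin$ rescaled by $(1+|u|)^{-1}$, split the inversion integral at a radius chosen to balance the two pieces, then repeat the truncation in physical space using the moment bound. The only point you leave implicit is the trivial case where $d_{FM}(X_1,X_2)$ is not small (so that your choice of $U$ is below $1$); the paper disposes of this separately by invoking $d_{TV}\leq 2$ and the uniform boundedness of the densities.
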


If all the moments are bounded and the laws are smooth enough (in the sense that the characteristic function
decays to $0$ faster than any negative power function) then convergence rates in total variation are
arbitrarily close to those in $d_{FM}$, as the next Corollary shows. 

\begin{corollary}\label{fourier2}
Let $E[|X_{1}|^{k}]+E[|X_{2}|^{k}]\leq K_{k}<\infty$ for all $k\in\mathbb{N}$. Assume also that for all $l\in\mathbb{N}$ there is
$C_{l}>0$ such that the corresponding characteristic functions $\phi_{1},\phi_{2}$ satisfy
\begin{equation}\label{pipi}
|\phi_{1}(u)|+|\phi_{2}(u)|\leq \frac{C_{l}}{(1+|u|)^{l}},\ u\in\mathbb{R}^{d}.
\end{equation}
Then for all $0<\epsilon<1$ there is a constant $C_{\epsilon}$
(which depends on $d$ and on the sequences $K_{k},C_{l}$ but not on $X_{1},X_{2}$)
such that
$$
d_{TV}(X_{1},X_{2})\leq C_{\epsilon}d_{FM}^{1-\epsilon}(X_{1},X_{2}).
$$
\end{corollary}

\begin{remark} In the special case $d=1$ a closely related result
has already been shown in \cite{che} with a different approach, not relying on Fourier analysis. 
Theorem 2.1 of \cite{che} directly implies that, if $X_{1}$, $X_{2}$ have $C^{\infty}$ densities 
$f_{1},f_{2}$ with all their derivatives in $L^{1}$ then for all $\epsilon>0$ there is a constant 
$C$ depending on $\epsilon$ and the $L^{1}$
norms of the derivatives of $f_{1},f_{2}$ such that
$$
d_{TV}(X_{1},X_{2})\leq C W_{1}^{1-\epsilon}(X_{1},X_{2}).
$$
We now compare this result to ours: Corollary \ref{fourier2} above assumes the existence of all moments which Theorem 2.1 
in \cite{che} does not. Their condition of all
the derivatives being in $L^{1}$ also seems to be slightly weaker than \eqref{pipi}. However, $d_{FM}\leq W_{1}$, so our conclusion
in Corollary \ref{fourier2} is somewhat stronger than that of Theorem 2.1 in \cite{che}.
More importantly, our approach works for an arbitrary dimension $d$. 
\end{remark}

\begin{remark}\label{DK} {\rm For an integer $k\geq 1$,
denote by $\mathcal{M}_{k}$ the set of $k$ times continuously differentiable functions $f:\mathbb{R}^{d}\to\mathbb{R}$
such that $\sum_{l=0}^{k}||f^{(l)}||_{\infty}\leq 1$. 
Here $f^{(l)}$ refers to the $l$th derivative of $f$, in particular, $f^{(0)}=f$ and $f^{(1)}=\nabla f$. 
Following \cite{bally,bc1}, define the metric 
$$
d_{k}(\mu_{1},\mu_{2}):=\sup_{f\in\mathcal{M}_{k}}\left|\int_{\mathbb{R}^{d}}f(x)\mu_{1}(dx)-\int_{\mathbb{R}^{d}}f(x)\mu_{2}(dx)\right|.
$$
Note that $d_{1}=d_{FM}$. Setting $A:=2d_{k}(X_{1},X_{2})$, one may 
replace $\int_{|u|\leq M} (1+|u|) A\, du$ by $\int_{|u|\leq M} (1+|u|)^{k} A\, du$ in \eqref{modi} below, in the
proof of Lemma \ref{withrate}. Continuing that argument mutatis mutandis,
it follows that, under the conditions of Lemma \ref{withrate}, the estimate
$$
d_{TV}(X_{1},X_{2})\leq G d_{k}^{g}(X_{1},X_{2})	
$$
holds with 
$$
g=\frac{\gamma\delta}{(d+k+\gamma)(d+\delta)}
$$
and with a suitable constant $G$. Also Corollary \ref{fourier2} holds with $d_{FM}$ replaced by
$d_{k}$, for any $k$.}
\end{remark}

\begin{remark}\label{cf}
{\rm We may define, as in \cite{bc2}, the metric
$$
d_{CF}(\mu_{1},\mu_{2}):=\sup_{u\in\mathbb{R}^{d}}\left|\int_{\mathbb{R}^{d}}e^{i\langle u,x\rangle}\mu_{1}(dx)-{}
\int_{\mathbb{R}^{d}}e^{i\langle u,x\rangle}\mu_{2}(dx)\right|.
$$

Set $A:=d_{CF}(X_{1},X_{2})$. Then one may write $\int_{|u|\leq M}  A\, du$ instead of
$\int_{|u|\leq M} (1+|u|) A\, du$ in \eqref{modi}. Repeating the arguments of Lemma \ref{withrate} mutatis mutandis,
one arrives at 
$$
d_{TV}(X_{1},X_{2})\leq G d_{CF}^{g}(X_{1},X_{2})	
$$
with 
$$
g=\frac{\gamma\delta}{(d+\gamma)(d+\delta)}
$$
and with a constant $G>0$. Also Corollary \ref{fourier2} holds with $d_{FM}$ replaced by $d_{CF}$.}
\end{remark}

We finally mention one more new observation: a ``dominated convergence''-type criterion which allows to infer
total variation convergence from convergence in law under the mild condition \eqref{domin} but 
without providing a convergence rate.

\begin{lemma}\label{simple}
Let $X$, $X_{n}$, $n\in\mathbb{N}$ be $\mathbb{R}^{d}$-valued
random variables with $X_{n}\to X$ in law as $n\to\infty$. Let $\phi_{n}$ denote the
characteristic function of $X_{n}$. 
Assume that
\begin{equation}\label{domin}
\sup_{n\in\mathbb{N}}|\phi_{n}(u)|\leq \psi(u),\ u\in\mathbb{R}^{d}\mbox{ and }
\int_{\mathbb{R}^{d}}\psi(u)\, du<\infty
\end{equation}
for some measurable $\psi:\mathbb{R}^{d}\to\mathbb{R}_{+}$. 
Then $d_{TV}(X_{n},X)\to 0$, $n\to\infty$.
\end{lemma}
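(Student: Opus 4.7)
The plan is to reduce the claim to a pointwise convergence statement for densities and then invoke Scheff\'e's lemma. First I would use Lévy's theorem: weak convergence $X_{n}\to X$ implies pointwise convergence of the characteristic functions, $\phi_{n}(u)\to\phi(u)$ for every $u\in\mathbb{R}^{d}$, where $\phi$ denotes the characteristic function of $X$. Passing to the limit in \eqref{domin} yields $|\phi(u)|\leq\psi(u)$, so $\phi$ is integrable. Consequently all of $\phi_{n}$ and $\phi$ belong to $L^{1}(\mathbb{R}^{d})$, and Fourier inversion provides bounded continuous densities $f_{n}$ and $f$ for $X_{n}$ and $X$, respectively.

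Next I would show that $f_{n}(x)\to f(x)$ for every $x\in\mathbb{R}^{d}$. Writing
$$
|f_{n}(x)-f(x)|\leq \frac{1}{(2\pi)^{d}}\int_{\mathbb{R}^{d}}|\phi_{n}(u)-\phi(u)|\, du,
$$
the integrand is dominated by $2\psi(u)$ and tends to $0$ pointwise, so the dominated convergence theorem gives even the uniform bound $\|f_{n}-f\|_{\infty}\to 0$, which is much more than we need.

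With pointwise convergence of probability densities in hand, Scheff\'e's lemma yields
$$
\int_{\mathbb{R}^{d}}|f_{n}(x)-f(x)|\, dx\to 0,
$$
and the left-hand side equals $2d_{TV}(X_{n},X)$, completing the proof. I do not foresee any real obstacle here: the proof is a routine chain of dominated-convergence arguments, with Scheff\'e's lemma doing the essential work of upgrading pointwise density convergence to $L^{1}$ (equivalently, total variation) convergence. The integrability hypothesis \eqref{domin} is precisely what is needed to ensure densities exist for all $n$ and to justify passing the limit through the inverse Fourier transform.
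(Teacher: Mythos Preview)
Your proof is correct and follows essentially the same route as the paper: pointwise convergence of $\phi_{n}$ to $\phi$ (via convergence in law), the bound $|\phi|\leq\psi$ obtained in the limit, inverse Fourier transform to get densities, dominated convergence for $\int|\phi_{n}-\phi|$, and Scheff\'e's lemma to conclude. The only quibble is that with the paper's normalization $d_{TV}=\sup_{|f|\leq 1}|\cdots|$ one has $d_{TV}(X_{n},X)=\int|f_{n}-f|$ rather than half of it, but this is inconsequential.
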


\begin{example} {\rm Let $Z(n)$, $n\in\mathbb{N}$ be a sequence of independent real-valued random variables
satisfying the conditions of Kolmogorov's three-series theorem. If there exists $n_{0}$
such that $\phi_{Z(n_{0})}$ is integrable then $\sum_{i=0}^{\infty}Z({i})$ converges in
total variation, too. Indeed, denoting $S_{n}:=\sum_{i=0}^{n}Z(i)$, we have
$$
|\phi_{S_{n}}(u)|=\prod_{i=0}^{n}|\phi_{Z(i)}(u)|\leq |\phi_{Z(n_{0})}(u)|
$$
for each $n\geq n_{0}$ since the absolute value of a characteristic function is at most $1$.
We may now invoke Lemma \ref{simple} above.}
\end{example}

\section{Contractive stochastic systems with memory}\label{contra} 

Consider a stationary sequence of random variables $\xi_{n}$ and a function $f(x,z)$. Stochastic processes
generated by the recursion $X_{n+1}=f(X_{n},\xi_{n+1})$ are called \emph{stochastically recursive sequence}s.
These far-reaching generalizations of Markov chains as well as their continuous-time generalizations appear in several application areas, 
see e.g.\ \cite{hairer,red,paolo-fad,borovkov-foss,elton}. 
Chapter 3 of the monograph \cite{borovkov} presents various contractivity 
hypotheses under which such sequences converge to an equilibrium state 
weakly, see also the recent survey \cite{gyorfi_laci}. Our purpose here is to show convergence in total variation
in the specific model of \cite{varvenne}, for
the first time in the literature.

Let $\varepsilon_{i}$, $i\in\mathbb{Z}$ be a sequence of $\mathbb{R}^{d}$-valued, integrable, i.i.d.\ random variables.
We shall consider the causal linear process 
$$
\xi_{n}:=\sum_{j=0}^{\infty}b_{j}\varepsilon_{n-j},\ n\in\mathbb{Z}
$$
where the scalars $b_{j}$ are assumed to satisfy 
\begin{equation}\label{coeff}
\sum_{j=0}^{\infty}|b_{j}|<\infty	
\end{equation} 
and $b_{0}\neq 0$.{}
Under these conditions, the series converges in $L^{1}$ as well as almost surely. If we
even stipulate that $\varepsilon_{0}$ is zero-mean and $E[|\varepsilon_{0}|^{2}]<\infty$ then condition
\eqref{coeff} can be weakened to $\sum_{j=0}^{\infty}b_{j}^{2}<\infty$. When we refer to
linear processes in the sequel we shall mean any of the two cases just explained.

Let $\nu:\mathbb{R}^{d}\to \mathbb{R}^{d}$ be measurable. 
We investigate the following stochastic equation, driven by the noise sequence
$\xi_{n}$, $n\in\mathbb{Z}$:
\begin{equation}\label{eqi}
X_{n+1}=\nu(X_{n})+\xi_{n+1},\ n\in\mathbb{N}.
\end{equation}
Note that $X_{n}$ has a non-linear dynamics and fails the Markov property, in general. It is a stochastic
dynamical system driven by \emph{coloured noise}.

\begin{assumption}\label{contractivity}
Let $X_{0}$ be integrable and independent of $(\varepsilon_{i})_{i\in\mathbb{Z}}$.
Let
\begin{equation}\label{contract}
|\nu(x_{1})-\nu(x_{2})|\leq \kappa |x_{1}-x_{2}|,\ x_{1},x_{2}\in\mathbb{R}^{d}	
\end{equation} 
hold for some $\kappa<1$. 
\end{assumption}

We recall the well-known Wasserstein-$1$ metric, defined for Borel probabilities $\mu_{1},\mu_{2}$ on $\mathbb{R}^{d}$
as follows:
$$
W_{1}(\mu_{1},\mu_{2}):=\inf_{\pi\in \mathcal{C}(\mu_{1},\mu_{2})}\int_{\mathbb{R}^{d}\times\mathbb{R}^{d}}
|x-y|\pi(dx,dy);
$$
here $\mathcal{C}(\mu_{1},\mu_{2})$ denotes the set of probabilities on $\mathbb{R}^{d}\times\mathbb{R}^{d}$
that are couplings of $\mu_{1},\mu_{2}$. Recall also the dual characterization (e.g.\ Theorem 5.10 in \cite{villani})
\begin{equation}\label{kr}
W_{1}(\mu_{1},\mu_{2})=\sup_{f\in\mathcal{N}}\left|\int_{\mathbb{R}^{d}}f(x)\mu_{1}(dx)-\int_{\mathbb{R}^{d}}f(x)\mu_{2}(dx)\right|,
\end{equation}
where $\mathcal{N}$ is the family of Lipschitz continuous $f:\mathbb{R}^{d}\to\mathbb{R}$ with Lipschitz constant at most $1$.

There are innumerable variants of the following result in the literature, often under much weaker assumptions.
We present only a very simple version here which is still sufficient to illustrate the power of the methods we have developed.
Greater generality could be sought at the price of more complicated conditions and arguments.

\begin{proposition}\label{limd} Let Assumption \ref{contractivity} be in force, that is, let \eqref{contract} hold with $\kappa<1$.
There exists $C>0$ and a probability $\mu_{*}$ such that
$$
W_{1}(\mathcal{L}(X_{n}),\mu_{*})\leq C\kappa^{n},\ n\in\mathbb{N}.
$$
\end{proposition}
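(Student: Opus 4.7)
The plan is to construct a stationary solution $\tilde X_{n}$, $n\in\mathbb{Z}$ of the same recursion driven by the same noise $(\xi_{n})$, set $\mu_{*}:=\mathcal{L}(\tilde X_{0})$, and then exploit the contraction property of the synchronous coupling.

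First I construct $\tilde X_{n}$ by ``starting from the infinite past''. For each $N\in\mathbb{N}$, define $Y^{(N)}_{n}$, $n\geq -N$, by the recursion $Y^{(N)}_{n+1}=\nu(Y^{(N)}_{n})+\xi_{n+1}$ with $Y^{(N)}_{-N}:=0$. For $M>N$, the two sequences $Y^{(N)}$ and $Y^{(M)}$ are both driven by the same noise after time $-N$, so \eqref{contract} gives
$$
|Y^{(M)}_{n}-Y^{(N)}_{n}|\leq \kappa^{n+N}|Y^{(M)}_{-N}-Y^{(N)}_{-N}|=\kappa^{n+N}|Y^{(M)}_{-N}|,\quad n\geq -N.
$$
Iterating the recursion and using $|\nu(x)|\leq |\nu(0)|+\kappa|x|$ together with the stationarity of $\xi$, one sees that $\sup_{M\geq N}E|Y^{(M)}_{-N}|\leq (|\nu(0)|+E|\xi_{0}|)/(1-\kappa)=:K$. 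Hence $E|Y^{(M)}_{n}-Y^{(N)}_{n}|\leq K\kappa^{n+N}$, so $(Y^{(N)}_{n})_{N}$ is Cauchy in $L^{1}$ for each fixed $n$; call the limit $\tilde X_{n}$. Passing to the limit in the recursion, $\tilde X_{n+1}=\nu(\tilde X_{n})+\xi_{n+1}$ almost surely. Since the construction depends only measurably on the shift of $(\varepsilon_{i})_{i\in\mathbb{Z}}$, stationarity of $(\varepsilon_{i})$ propagates to $(\tilde X_{n})$; set $\mu_{*}:=\mathcal{L}(\tilde X_{0})$, so $\mathcal{L}(\tilde X_{n})=\mu_{*}$ for all $n$.

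Now couple $X_{n}$ with $\tilde X_{n}$ on the same probability space, both driven by $(\xi_{n})$. Applying \eqref{contract} inductively,
$$
|X_{n}-\tilde X_{n}|\leq \kappa^{n}|X_{0}-\tilde X_{0}|,\quad n\in\mathbb{N}.
$$
Since $X_{0}\in L^{1}$ by Assumption~\ref{contractivity} and $\tilde X_{0}\in L^{1}$ by construction, $C:=E|X_{0}-\tilde X_{0}|<\infty$. Using the coupling characterization of $W_{1}$ (or the dual formulation \eqref{kr}) yields
$$
W_{1}(\mathcal{L}(X_{n}),\mu_{*})\leq E|X_{n}-\tilde X_{n}|\leq C\kappa^{n}.
$$

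The routine obstacle is the $L^{1}$-bound $\sup_{M\geq N}E|Y^{(M)}_{-N}|<\infty$, which rests on the contractivity constant $\kappa<1$ and the fact that $(\xi_{n})$ is stationary with $E|\xi_{0}|<\infty$ (guaranteed by \eqref{coeff} together with $\varepsilon_{0}\in L^{1}$; in the $L^{2}$-case one uses $E|\xi_{0}|\leq (E|\xi_{0}|^{2})^{1/2}<\infty$). The only other point that needs a small amount of care is the measurability/stationarity argument showing that $\tilde X_{n}$ is a measurable function of $(\varepsilon_{i})_{i\leq n}$ with the shift-invariant joint law; this follows because the approximations $Y^{(N)}_{n}$ are already measurable functions of finitely many $\varepsilon_{i}$'s whose joint distribution is shift-invariant, and $L^{1}$-limits preserve this property.
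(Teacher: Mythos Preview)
Your proof is correct and rests on the same pullback/synchronous-coupling idea as the paper's. The only real difference is in how the limit is pinned down: the paper starts its backward iterates from $X_{0}$ rather than $0$, observes that the time-zero value $\bar{X}_{0}(n)$ already has law $\mathcal{L}(X_{n})$, and then shows this sequence of laws is Cauchy in the complete metric $W_{1}$, obtaining $\mu_{*}$ abstractly by completeness; you instead take the $L^{1}$ limit of the backward iterates to build the stationary solution $\tilde{X}_{n}$ explicitly and then couple it synchronously with $X_{n}$. Your route gives a concrete realisation of $\mu_{*}$ as $\mathcal{L}(\tilde{X}_{0})$ at the price of the extra stationarity argument, while the paper's avoids that step but is less constructive. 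Both hinge on the same contraction estimate and the same uniform $L^{1}$ bound on the iterates.
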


Using Lemma \ref{withrate} above, we shall considerably strengthen Proposition \ref{limd} and
claim geometric convergence in total variation, too.

\begin{theorem}\label{lime} Assume that the characteristic function $\phi$ of $\varepsilon_{0}$
satisfies
\begin{equation}\label{fiu}
|\phi(u)|\leq C_{\flat}(1+|u|)^{-d-\gamma}
\end{equation} for some $\gamma, C_{\flat}>0$. Then 
$$
d_{TV}(\mathcal{L}(X_{n}),\mu_{*})\leq C'\varrho^{n},\ n\in\mathbb{N}.
$$	
for suitable $C'>0$, $\varrho<1$.
\end{theorem}

\begin{remark}
{\rm One can check, Theorem 1.8 of \cite{fourier}, that if $\varepsilon_{0}$ has a $(d+1)$ times
differentiable density with all these derivatives decaying at faster than polynomial rates at infinity then \eqref{fiu} holds, this
is essentially the argument used in Lemma \ref{majd} below. We can thus see that \eqref{fiu} is a mild requirement.

As the smoothness of the random variables $\xi_{n}$ is determined by that of $\varepsilon_{0}$,
the interpretation of condition \eqref{fiu} is that the coloured noise $\xi_{n}$ driving the system
should be ``smooth enough''.}	
\end{remark}

\begin{remark}
{\rm Continuous-time versions of the stochastic dynamical system \eqref{eqi}
were studied in \cite{hairer} and in subsequent papers. Fractional Gaussian noise 
was considered and the existence of a limiting law was established in total variation (actually,
in a much stronger sense). Instead of the strong contraction property \eqref{contract},   
only a dissipativity condition was imposed on $\nu$. 

Based on techniques of \cite{hairer}, \cite{varvenne} established
analogous results in discrete time for \eqref{eqi} when the $\varepsilon_{i}$ are Gaussian.
For several particular choices of the coefficients $b_{j}$ (exponential or power decay), \cite{varvenne} shows convergence
to a limiting law in total variation at certain rates, depending on the sequence $b_{j}$. 
Note that only polynomial rates are established in that paper.

Although our Theorem \ref{lime} is less deep (due to the strong Assumption \ref{contractivity}), 
it applies nevertheless to non-Gaussian systems and gives exponentially fast convergence in total variation.
We are not aware of any other such results in the existing literature.}
\end{remark}

\section{A Berry--Esseen-type result}\label{berry}

Although normal approximation is not our chief concern in the present research, we
demonstrate in this section what can be achieved in that area by our methods.

Let $Y_{n}$, $n\in\mathbb{N}$ be a sequence of i.i.d.\ $\mathbb{R}^{d}$-valued zero-mean random variables
with unit covariance matrix. We consider the normalized sums
$S_{n}:=\sum_{j=1}^{n}Y_{j}/\sqrt{n}$ and study their convergence speed to standard Gaussian law.
The optimal rate is well-known to be of order $O(n^{-1/2})$ in several metrics related to weak convergence,
see e.g.\ \cite{yurinskii} and the references therein.
In Theorem \ref{be} below, we achieve an \emph{almost} optimal rate in total variation distance 
under sufficiently strong smoothness and moment conditions.

\begin{assumption}\label{forclt}
Denote by $\phi$ the characteristic function of $Y_{0}$. There is $\alpha>0$ such that
\begin{equation}\label{smoothie}
|\phi(u)|\leq C_{\sharp}(1+|u|)^{-\alpha},\ u\in\mathbb{R}^{d}
\end{equation}
with some constant $C_{\sharp}$. For all $k\in\mathbb{N}$, 
\begin{equation}\label{momenti}
E[|Y_{0}|^{k}]<\infty.	
\end{equation}
\end{assumption}

\begin{theorem}\label{be} Let Assumption \ref{forclt} be in force. Then for each $\epsilon>0$, 
\begin{equation}\label{television}
d_{TV}(S_{n},\Phi)\leq C_{\epsilon}n^{-\frac{1}{2}+\epsilon},\ n\geq 2
\end{equation}
holds for a suitable constant $C_{\epsilon}$, where $\Phi$ is a random variable with $d$-dimensional standard Gaussian law.
One also has
\begin{equation}\label{supy}
\sup_{x\in\mathbb{R}^{d}}|f_{S_{n}}(x)-f_{\Phi}(x)|\leq C_{\epsilon}'n^{-\frac{1}{2}+\epsilon}
\end{equation}
where $f_{S_{n}}$ denotes the density function of $S_{n}$ and $f_{\Phi}$ is the standard $d$-dimensional
Gaussian density.
\end{theorem}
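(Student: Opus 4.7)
The plan is to apply the $d_{CF}$-version of Corollary \ref{fourier2} (Remark \ref{cf}), which under uniform moment and smoothness conditions yields $d_{TV}(X_1,X_2)\leq C_\eta d_{CF}(X_1,X_2)^{1-\eta}$ for any $\eta\in(0,1)$. It therefore suffices to verify, uniformly in $n$, (a) $\sup_n E[|S_n|^k]<\infty$ for each $k$; (b) $\sup_n|\phi_{S_n}(u)|\leq C_l(1+|u|)^{-l}$ for each $l$; and (c) the rate $d_{CF}(S_n,\Phi)\leq C/\sqrt n$. Setting $\eta:=2\epsilon$ at the end will then produce \eqref{television}.

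Part (a) follows from Rosenthal's inequality: for $k\geq 2$, $E[|\sum_{j=1}^n Y_j|^k]\leq c_k(nE[|Y_0|^k]+n^{k/2})$, so $\sup_n E[|S_n|^k]\leq c_k(1+E[|Y_0|^k])<\infty$ by \eqref{momenti}.

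Part (b) is the main technical step. Since $Y_0$ has zero mean and identity covariance, a second-order Taylor expansion gives $|\phi(v)|\leq e^{-|v|^2/4}$ on some ball $|v|\leq \delta$, so $|\phi_{S_n}(u)|=|\phi(u/\sqrt n)|^n\leq e^{-|u|^2/4}$ for $|u|\leq\delta\sqrt n$, which decays faster than any polynomial. For $|u|>\delta\sqrt n$ I use that \eqref{smoothie} forces $|\phi|\to 0$ at infinity, which in particular rules out $|\phi(v_0)|=1$ for any $v_0\neq 0$ (otherwise $|\phi(kv_0)|\equiv 1$ along the integer multiples), so by continuity $Q_\delta:=\sup_{|v|\geq\delta}|\phi(v)|<1$. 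Given $l$, I pick an integer $k$ with $k\alpha>l$ and split
\begin{equation*}
|\phi(u/\sqrt n)|^n=|\phi(u/\sqrt n)|^{n-k}\cdot|\phi(u/\sqrt n)|^k\leq Q_\delta^{n-k}\,C_\sharp^k\bigl(1+|u|/\sqrt n\bigr)^{-k\alpha};
\end{equation*}
an elementary manipulation using $|u|/\sqrt n\geq \delta$ then produces $|\phi_{S_n}(u)|\leq C_l(1+|u|)^{-l}$ uniformly for $n\geq k$, the factor $Q_\delta^{n-k}$ absorbing the polynomial inflation arising from the mismatch between $|u|$ and $|u|/\sqrt n$. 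The finitely many $n<k$ are handled individually from \eqref{smoothie}.

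Part (c) follows from a standard third-order expansion of $\log\phi(u/\sqrt n)$: on $|u|\leq c\sqrt n$ one gets $|\phi_{S_n}(u)-e^{-|u|^2/2}|\leq C|u|^3 e^{-|u|^2/4}/\sqrt n$, whose supremum in $u$ is $O(n^{-1/2})$, while on $|u|>c\sqrt n$ both characteristic functions are exponentially small in $n$ by part (b) and the Gaussian tail. Substituting (a)--(c) into the $d_{CF}$-version of Corollary \ref{fourier2} and setting $\eta=2\epsilon$ yields \eqref{television}. The sup-density estimate \eqref{supy} follows from the $d_{CF}$-analogue of \eqref{sup}, which depends only on part (b); the exponent $\gamma/(d+\gamma)$ approaches $1$ as the decay exponent in (b) is taken large, giving the same $n^{-1/2+\epsilon}$ rate. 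The chief obstacle is part (b): raising \eqref{smoothie} to the $n$th power naively inflates the constant to $C_\sharp^n$, while using only a fixed number of factors forfeits all polynomial decay once $|u|/\sqrt n$ is bounded, and the compromise $n=(n-k)+k$ is what reconciles these two demands.
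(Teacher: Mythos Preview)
Your argument is correct and differs from the paper's in one substantive respect. Parts (a) and (b) mirror the paper closely: Lemma~\ref{boundy} bounds moments via Marcinkiewicz--Zygmund (your Rosenthal argument is equivalent), and Lemma~\ref{bundi} establishes the uniform characteristic-function decay by a three-regime split in $|u|$ very similar to yours---your $(n-k)+k$ device is a neat way of merging the paper's ``moderate'' and ``large'' cases. The real divergence is in the weak-convergence input. The paper does \emph{not} estimate $d_{CF}$; instead it quotes a multidimensional Berry--Esseen bound in Prokhorov distance due to Yurinskii, converts this to $d_{FM}=O(n^{-1/2})$ via a result of Dudley, and then applies Corollary~\ref{fourier2} in its original $d_{FM}$ form. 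Your route through $d_{CF}$ and Remark~\ref{cf} replaces these two external citations by a self-contained third-order Taylor expansion of $\phi_{S_n}$, which is more elementary and stays entirely within the paper's own toolkit, at the price of writing out a standard computation. One small inaccuracy: the claim that the finitely many $n<k$ ``are handled individually from \eqref{smoothie}'' is not quite right---for those $n$ the decay exponent is only $n\alpha$, which may fall short of $l$---but this is harmless, since for finitely many $n$ one simply absorbs $d_{TV}\leq 2$ into $C_\epsilon$; the paper's Lemma~\ref{bundi} carries exactly the same restriction $n\geq N(l)$.
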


\begin{remark}
{\rm Assuming that $Y_{0}$ has a bounded density and finite third moment, Theorem 1.1 of the very recent
\cite{bobkov-goetze}
proves \eqref{supy} with $\epsilon=0$. 
Assume now also that $E[|Y_{0}|^{k}]<\infty$ for all $k$. Then for $M=n^{1/(4k+2)}$, Markov's inequality and \eqref{supy} imply
\begin{eqnarray*}
& & \int_{\mathbb{R}^{d}}|f_{S_{n}}(x)-f_{\Phi}(x)|\, dx\leq \int_{|x|\leq M} C'n^{-1/2}\, dx 
+\int_{|x|>M} [f_{S_{n}}(x)+f_{\Phi}(x)]\, dx\\
&\leq& C'n^{-1/2}M+E[|Y_{1}|^{2k}+|\Phi|^{2k}]M^{-2k}\leq C_{k}n^{-1/2 +1/(4k+2)}
\end{eqnarray*} 
for some constants $C'$, $C_{k}$ which, for $k$ satisfying $1/(4k+2)<\epsilon$,
shows that \eqref{television} holds under these conditions.
Our Theorem \ref{be} complements this observation, assuming \eqref{smoothie} instead of a bounded
density.

We do not know if, in a general multidimensional setting, one can obtain total variation rates better than $O(n^{-1/2+\epsilon})$.
In the case $d=1$ there exist more precise results: \cite{bcg} proves the optimal rate $O(n^{-1/2})$ in total
variation when $d=1$, under
the (mild) condition that the law of $Y_{0}$ has finite relative entropy with respect to
standard Gaussian law. 

Normal approximation in total variation has been thoroughly studied in the
context of Malliavin calculus, see \cite{nourdin-peccati,bally,bc2} and the references therein.

Note that our arguments for proving Theorem \ref{be} are quite simple,
they combine well-known results about
weak convergence estimates (such as \cite{yurinskii}) with the general-purpose Corollary \ref{fourier2} above
which does not rely on the specificity of the Gaussian distribution. With this in mind, Theorem
\ref{be} illustrates quite convincingly the strength of the methods developed in Section \ref{tot} above.
}
\end{remark}



\begin{example}
{\rm Let $d=1$. Remember that zero-mean Laplace distribution with parameter $\lambda>0$ has density 
$\lambda e^{-\lambda |x|}/2$, $x\in\mathbb{R}$ and characteristic function $\lambda^{2}/(\lambda^{2}+t^{2})$, $t\in\mathbb{R}$.
Recalling that $\sum_{k=1}^{\infty}\frac{6}{k^{2}\pi^{2}}=1$, we consider a mixture of Laplace distributions with parameters $k$:
$$
f(x):=\sum_{k=1}^{\infty}\frac{6}{k^{2}\pi^{2}}\frac{k}{2}e^{-k|x|}.
$$
Note that the series converges for all $x\neq 0$ to a density function which is unbounded near $0$ and whose characteristic function equals
$$
\phi(t):=\sum_{k=1}^{\infty}\frac{6}{k^{2}\pi^{2}}\frac{k^{2}}{k^{2}+t^{2}}\leq \frac{C}{1+|t|},\ t\in\mathbb{R},
$$ 
with some $C$, by elementary estimates.
This means that $\phi$ satisfies condition \eqref{smoothie}. 
Let $Z_{n}$ be an i.i.d.\ sequence with density $f$. Clearly, $E[Z_{1}]=0$ and
\eqref{momenti} holds for the sequence $Z_{n}$. Denoting by $\sigma^{2}$ the variance of $Z_{1}$, define $Y_{n}:=Z_{n}/\sigma$.
Theorem \ref{be} applies to the sequence $Y_{n}$, $n\in\mathbb{N}$ while
results of \cite{bobkov-goetze} do not, since $f$ is unbounded.
}	
\end{example}


\section{Applications to Malliavin calculus}\label{malliavin}

We assume that the reader is familiar with basic notions of Malliavin calculus on the Wiener space, as presented
in \cite{nualart} or in Chapter 2 of \cite{bally}. 
We restrict ourselves to the time interval $[0,1]$ and consider an $m$-dimensional Brownian motion $B_{t}$, $t\in [0,1]$. Standard notation 
will be applied, conforming to the
cited works. The set of $k$ times Malliavin-differentiable real-valued functionals with $p$-integrable derivatives
is denoted by $\mathbb{D}^{k,p}$, the cases $p=\infty$ or $k=\infty$ being self-explanatory. 

Let $G\in\mathbb{D}^{k,p}$ and let $\alpha=(i_{1},\ldots,i_{m})\in\mathbb{N}^{m}$ be a multiindex with $|\alpha|\leq k$, where
$|\alpha|:=i_{1}+\ldots+i_{m}$. Then
$D^{\alpha}G$ denotes the respective Malliavin (partial) derivative. Note that this is a \emph{random field}
indexed by $(t_{1},\ldots,t_{|\alpha|})\in [0,1]^{|\alpha|}$. 
We will need $\mathbb{R}^{d}$-valued functionals $F$
in the sequel, which are elements of $(\mathbb{D}^{k,p})^{d}$. The same notation $D^{\alpha}$ will refer to
their Malliavin derivatives. We will use the following norms:
$$
\Vert F\Vert_{k,p}:=||F||_{L^{p}(\Omega)}+\sum_{1\leq |\alpha|\leq k} \left\Vert \Vert D^{\alpha}F\Vert_{L^{2}([0,1]^{|\alpha|})}\right\Vert_{L^{p}(\Omega)},
$$ 
where $L^{p}(\Omega)$ refers to the Banach space of $p$-integrable $\mathbb{R}^{d}$-valued 
random variables while for integers $j\geq 1$, $L^{2}([0,1]^{j})$ is the Hilbert space of $\mathbb{R}^{d}$-valued
square-integrable functionals with respect to the $j$-dimensional Lebesgue measure on $[0,1]^{j}$.

The following result is well-known in various formulations, it is a simple
consequence of e.g.\ Theorem 2.3.1 of \cite{bally}.

\begin{proposition}\label{malliavin-estimate} Let $I$ be an arbitrary
index set. Let $q\geq 1$ be an integer and
let $F_{i}\in (\mathbb{D}^{q+2,\infty})^{d}$, $i\in I$ be a family of functionals
with corresponding Malliavin matrices $\sigma_{i}$.
Let 
$$
C_{r}^{\sigma}:=\sup_{i\in I}E[|\mathrm{det}(\sigma_{i})|^{-r}]<\infty,
$$
as well as
$$
C_{r}^{F}:=\sup_{i\in I}\Vert F_{i}\Vert_{q+2,r}<\infty,
$$
hold for all $r\geq 1$. Then $F_{i}$ has a continuous density $f_{i}$
with respect to the $d$-dimensional Lebesgue measure,
and for all $l\geq 1$ we have
\begin{equation}\label{kelli}
\max_{|\alpha|\leq q}\sup_{i\in I}\sup_{x\in\mathbb{R}^{d}}|\partial_{\alpha} f_{i}(x)|(1+|x|)^{l}\leq \bar{C}_{q,l}
\end{equation}
for some finite $\bar{C}_{q,l}$.
\end{proposition}

The following result follows from Lemma 3.9 and Proposition 3.15 in \cite{bc2} if we replace $d_{FM}$
by $W_{1}$. Our version is thus slightly stronger since $d_{FM}\leq W_{1}$. See
also \cite{bc1} for earlier related results.

Our proof is quite different from that of \cite{bc2}: they apply advanced Malliavin calculus,
we use only the fairly standard Proposition \ref{malliavin-estimate}, combined with Corollary \ref{fourier2} above
which is a general probabilistic result without any reference to Malliavin calulus.

\begin{theorem}\label{smoothsequence1} 
Let $F,F_{n}\in (\mathbb{D}^{\infty,\infty})^{d}$, $n\in \mathbb{N}$ be a family of functionals
with corresponding Malliavin matrices $\sigma,\sigma_{n}$.
Assume that, for all $r>0$, $q\geq 1$,
$$
E[|\mathrm{det}(\sigma)|^{-r}]+\sup_{n\in \mathbb{N}}E[|\mathrm{det}(\sigma_{n})|^{-r}]<\infty,
$$
and also 
$$
\Vert F\Vert_{q,r}+\sup_{n\in\mathbb{N}}\Vert F_{n}\Vert_{q,r}<\infty.
$$	
Then for all $\epsilon>0$ there is a constant $C_{\epsilon}$ such that
$$
d_{TV}(F_{n},F)\leq C_{\epsilon}d_{FM}^{1-\epsilon}(F_{n},F).
$$
Furthermore, for the respective densities $f,f_{n}$,
$$
\sup_{x\in\mathbb{R}^{d}}|f_{n}(x)-f(x)|\leq C_{\epsilon}d_{FM}^{1-\epsilon}(F_{n},F).
$$
\end{theorem}

\begin{remark}\label{romi}
{\rm One may replace $d_{FM}$ by $d_{CF}$ or by $d_{k}$ for any $k\geq 2$ in the above result, see the
proof of Theorem \ref{smoothsequence1}.
}
\end{remark}

\section{Ramifications}\label{ramif}

By demanding even more stringent conditions on the laws of the random variables in consideration,
it is possible to improve the estimate of Corollary \ref{fourier2} above.

\begin{lemma}\label{exx}
Let $X_{1},X_{2}$ be such that, for some $r,C_{r}>0$, 
\begin{equation}\label{kati} 
E[e^{r|X_{i}|}]\leq C_{r},\quad  
\int_{\mathbb{R}^{d}}|\phi_{i}(u)|e^{r|u|}\, du\leq C_{r},\ i=1,2.
\end{equation}
Then there are constants $G,G'$, depending only on $d,r,C_{r}$, such that
\begin{equation}\label{otto}
d_{TV}(X_{1},X_{2})\leq G d_{FM}(X_{1},X_{2})|\ln d_{FM}(X_{1},X_{2})|^{2d+1}
\end{equation}
and 
\begin{equation}\label{toto}
\sup_{x\in\mathbb{R}^{d}}|f_{1}(x)-f_{2}(x)|\leq G' d_{FM}(X_{1},X_{2})|\ln d_{FM}(X_{1},X_{2})|^{d+1}.
\end{equation}
\end{lemma}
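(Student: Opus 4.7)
My plan is to mimic the proof of Lemma~\ref{withrate} step by step, but to replace the polynomial tail bounds used there by the sharper exponential tail bounds now available under condition \eqref{kati}. Intuitively, the polynomial decay produced algebraic rates $A^g$; exponential decay will produce nearly linear rates $A|\ln A|^{p}$ where $p$ comes only from polynomial prefactors that survive the balancing.

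I would begin by keeping the very first estimate of Lemma~\ref{withrate}, namely $|\phi_{1}(u)-\phi_{2}(u)|\leq (1+|u|)A$ with $A:=2d_{FM}(X_{1},X_{2})$. Assume first that $A$ is small enough that $|\ln A|\geq 1$ (the complementary regime will be dealt with at the end). By Fourier inversion and splitting at $|u|=M$, I would write
\[
|f_{1}(x)-f_{2}(x)|\leq \frac{1}{(2\pi)^{d}}\Bigl[\int_{|u|\leq M}(1+|u|)A\,du+\int_{|u|>M}[|\phi_{1}(u)|+|\phi_{2}(u)|]\,du\Bigr].
\]
The inner part is $O(AM^{d+1})$ as in \eqref{modi}. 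For the outer part, instead of invoking \eqref{fi} and \eqref{explin}, I would use the Laplace trick
\[
\int_{|u|>M}|\phi_{i}(u)|\,du\leq e^{-rM}\int_{\mathbb{R}^{d}}|\phi_{i}(u)|e^{r|u|}\,du\leq C_{r}e^{-rM}.
\]
Choosing $M:=|\ln A|/r$ so that $e^{-rM}=A$, both contributions scale like $A|\ln A|^{d+1}$, which establishes \eqref{toto} (up to a multiplicative constant depending on $d,r,C_{r}$).

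For the total variation bound, I would then integrate $|f_{1}-f_{2}|$ over $\mathbb{R}^{d}$ and split at $|x|=M'$:
\[
\int_{\mathbb{R}^{d}}|f_{1}(x)-f_{2}(x)|\,dx\leq (2M')^{d}\tilde{G}A|\ln A|^{d+1}+P(|X_{1}|\geq M')+P(|X_{2}|\geq M').
\]
This time the tails are controlled by the exponential Markov inequality: $P(|X_{i}|\geq M')\leq e^{-rM'}C_{r}$ by the first half of \eqref{kati}. Setting $M':=|\ln A|/r$ again makes $e^{-rM'}=A$, and the first term becomes $O(A|\ln A|^{2d+1})$, yielding \eqref{otto}.

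For $A$ not small (say $A\geq e^{-r}$), the bound $d_{TV}\leq 2$ and the uniform bound $\|f_{i}\|_{\infty}\leq (2\pi)^{-d}\int|\phi_{i}|\,du\leq (2\pi)^{-d}C_{r}$ (which follows from the second half of \eqref{kati}) both produce trivial estimates that can be absorbed into enlarged constants $G,G'$. The main obstacle, apart from the routine bookkeeping of the two polynomial exponents, is tracking precisely where the $|\ln A|^{2d+1}$ comes from: the factor $(M')^{d}$ from the $d$-dimensional ball integration multiplies the $M^{d+1}$ already present in the pointwise estimate, producing $M^{2d+1}\sim |\ln A|^{2d+1}/r^{2d+1}$, which is exactly the exponent stated in \eqref{otto}.
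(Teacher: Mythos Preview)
Your proposal is correct and follows essentially the same two-step architecture as the paper's proof: Fourier inversion with a cut at $|u|=M$ for the pointwise density estimate \eqref{toto}, then spatial integration with a cut at $|x|=M'$ for the total variation bound \eqref{otto}, in each case choosing the cut proportional to $|\ln A|$ and handling the large-$A$ regime trivially. The one noteworthy technical difference is in bounding the characteristic-function tail $\int_{|u|>M}|\phi_i(u)|\,du$: the paper inserts the factorization $e^{r|u|/2}e^{-r|u|/2}$ and applies the Cauchy--Schwarz inequality, whereas you use the more direct pointwise bound $|\phi_i(u)|=|\phi_i(u)|e^{r|u|}e^{-r|u|}\leq |\phi_i(u)|e^{r|u|}e^{-rM}$ on $\{|u|>M\}$, giving $C_r e^{-rM}$ immediately. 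Your route is a bit cleaner and naturally suggests the choice $M=|\ln A|/r$ (so that $e^{-rM}=A$ exactly), while the paper takes $M=|\ln A|$; both lead to the same logarithmic exponents.
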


\begin{remark}{\rm Under the conditions of Lemma \ref{exx},
one can similarly obtain that, for each $k\geq 1$,
$$
d_{TV}(X_{1},X_{2})\leq C_{k} d_{k}(X_{1},X_{2})|\ln d_{k}(X_{1},X_{2})|^{2d+k}
$$
hold for suitable constants $C_{k}$. Note also that for a suitable $C_{0}$,
$$
d_{TV}(X_{1},X_{2})\leq C_{0} d_{CF}(X_{1},X_{2})|\ln d_{CF}(X_{1},X_{2})|^{2d}.
$$
}
\end{remark}

\begin{remark}
{\rm $E[e^{r|X_{i}|}]\leq C_{r}$, $i=1,2$ in \eqref{kati} above is simply exponential integrability
of the random variables concerned. It is less clear how to check the condition on the characteristic functions. 
Assume $X_{1},X_{2}$ have densities $f_{1},f_{2}$ that have holomorphic extensions $F_{1},F_{2}$
on a strip of the form $\{z=iy+x:\ x\in\mathbb{R},\ y\in [-B,B]\}\subset\mathbb{C}$ satisfying $|F_{1}(iy+x)|+|F_{2}(iy+x)|\leq g(x)$, 
$y\in [-B,B]$ with 
some square-integrable $g$. Then a version of the celebrated Paley-Wiener theorem (see Theorem IV of \cite{pw}) 
easily implies that, for some $\bar{r},C_{\bar{r}}>0$, 
$$
\int_{\mathbb{R}^{d}}|\phi_{i}(u)|e^{\bar{r}|u|}\, du\leq C_{\bar{r}},\ i=1,2.
$$
}	
\end{remark}

\section{Proofs}\label{pofs}

\begin{proof}[Proof of Lemma \ref{withrate}] Note that the area of the $d$-dimensional sphere is $a_{d}=2\pi^{d/2}/\Gamma(d/2)$.{}
We may thus estimate
\begin{eqnarray}\nonumber
& & \int_{|u|\geq h}(1+|u|)^{-d-\gamma}\, du\leq \int_{|u|\geq h}|u|^{-d-\gamma}\, du\\
&=& \int_{r\geq h}a_{d}r^{d-1}r^{-d-\gamma}\, dr= 
\frac{C_{\gamma}}{h^{\gamma}}	
\label{explin}
\end{eqnarray}
holds for all $h>0$ with constant $C_{\gamma}:=a_{d}/\gamma$.

Notice that, $\phi_{i}$, $i=1,2$ being integrable on $\mathbb{R}^{d}$, their inverse Fourier-transforms
$f_{i}$, $i=1,2$ are the densities of $X_{i}$, $i=1,2$ with respect to the $d$-dimensional Lebesgue measure.
For each $u\in\mathbb{R}^{d}$, the functions 
$$
\psi_{u}(x):=\frac{\sin(\langle u,x\rangle)}{2(1+|u|)},\quad \chi_{u}(x):=\frac{\cos(\langle u,x\rangle)}{2(1+|u|)}$$
satisfy
$$
||\psi_{u}||_{\infty}+||\nabla \psi_{u}||_{\infty}\leq 1,\ ||\chi_{u}||_{\infty}+||\nabla \chi_{u}||_{\infty}\leq 1,
$$
hence, by definition of the Fortet-Mourier metric,
\begin{eqnarray*}
|\phi_{1}(u)-\phi_{2}(u)| &\leq& 2(1+|u|)|iE[\psi_{u}(X_{1})] +E[\chi_{u}(X_{1})]-iE[\psi_{u}(X_{2})] -E[\chi_{u}(X_{2})]|
\\ &\leq&{}
4(1+|u|)d_{FM}(X_{1},X_{2}).
\end{eqnarray*}
Let us write $A:=4 d_{FM}(X_{1},X_{2})$ henceforth. 
First let us consider
the case $A\leq 1$. 
By what has been said, 
\begin{equation}\label{u}
|\phi_{1}(u)-\phi_{2}(u)|\leq [1+|u|]A.
\end{equation}
Fix $M\geq 1$ and estimate, using the inverse Fourier transform, \eqref{fi}, \eqref{explin} and \eqref{u}:
\begin{eqnarray} \nonumber
|f_{1}(x)-f_{2}(x)| &\leq&	\frac{1}{(2\pi)^{d}}\int_{\mathbb{R}^{d}}|\phi_{1}(u)-\phi_{2}(u)|\, du\\
\label{modi} &\leq& \int_{|u|\leq M} (1+|u|) A\, du+\int_{|u|> M}[|\phi_{1}(u)|+|\phi_{2}(u)|]\, du \\
\nonumber &\leq& (1+M)A (2M)^{d}+\int_{|u|> M}2c_{\phi}(1+|u|)^{-d-\gamma}\, du\\
\nonumber &\leq& A2^{d+1}M^{d+1}+2C_{\gamma}c_{\phi}M^{-\gamma}.
\end{eqnarray}
Now set $M:=A^{-1/(d+1+\gamma)}$ and conclude that, for a suitable constant $\tilde{G}$,
\begin{equation}\label{madar}
|f_{1}(x)-f_{2}(x)| \leq \tilde{G}A^{\gamma/(d+1+\gamma)},\quad x\in\mathbb{R}^{d}.
\end{equation}
Continuing estimation using \eqref{ef} and the Markov inequality,
\begin{eqnarray*}
\int_{\mathbb{R}^{d}}|f_{1}(x)-f_{2}(x)|\, dx &\leq& \int_{|x|<M} \tilde{G} A^{\gamma/(d+1+\gamma)}\,dx +
\int_{|u|\geq M}[f_{1}(x)+f_{2}(x)]\, dx\\
&\leq& (2M)^{d}\tilde{G}A^{\gamma/(d+1+\gamma)}+P(|X_{1}|\geq M)+P(|X_{2}|\geq M)\\
&\leq& 
2^{d}M^{d}\tilde{G}A^{\gamma/(d+1+\gamma)}+E[|X_{1}|^{\delta}+|X_{2}|^{\delta}]M^{-\delta}\\
&\leq& 2^{d}M^{d}\tilde{G}A^{\gamma/(d+1+\gamma)}+ 2c_{f}M^{-\delta}.
\end{eqnarray*}
Setting $M:=A^{-\frac{\gamma}{(d+1+\gamma)(d+\delta)}}$, we conclude that
$$
d_{TV}(X_{1},X_{2})\leq G'A^{\frac{\gamma\delta}{(d+1+\gamma)(d+\delta)}} 
$$
for some constant $G'$.

In the case $A>1$ recall that $d_{TV}\leq 2$ always hence $d_{TV}(X_{1},X_{2})\leq 2A^{g}$. 
Finally, we can set $G:=2^{2g+1}+4^{g}G'$ and \eqref{alenyeg} follows. \eqref{sup} comes from
\eqref{madar} when $A\leq 1$. When $A>1$, \eqref{sup} follows from the fact that $f_{1},f_{2}$ are bounded by a constant 
depending on $d,\gamma$ and $c_{\phi}$. 	
\end{proof}

\begin{proof}[Proof of Corollary \ref{fourier2}] Choose $k,l$ so large that $\frac{kl}{(d+1+l)(d+k)}>1-\epsilon$. Now invoke Lemma 
\ref{withrate} with the choice $\delta=k$, $\gamma=l$. 
The resulting constant $C_{\epsilon}$ will depend only on $K_{k},C_{l+d}$ and $d$.
\end{proof}

\begin{proof}[Proof of Lemma \ref{simple}]
Each $X_{n}$ has a continuous density $f_{n}$, the inverse Fourier transform of $\phi_{n}$.
Let $\phi$ be the characteristic function of $X$. Note that 
\begin{equation}\label{mathias}
|\phi(u)|\leq \psi(u),\ u\in\mathbb{R}^{d}	
\end{equation}
also holds since, by convergence in law, $\phi$ is the pointwise limit of the sequence $\phi_{n}$.
Hence $X$ also has a (continuous) density $f$, the inverse Fourier transform of $\phi$.
{}
Using the inverse Fourier transform we may estimate
$$
|f_{n}(x)-f(x)|\leq \frac{1}{(2\pi)^{d}}\int_{\mathbb{R}^{d}}|\phi_{n}(u)-\phi(u)|\, du\to 0
$$
as $n\to\infty$,
by the pointwise convergence of $\phi_{n}$ to $\phi$ and by Lebesgue's theorem. 
We may now conclude using Scheff\'e's theorem.
\end{proof}

\begin{proof}[Proof of Proposition \ref{limd}]
For $n\geq 1$, define $\bar{X}_{-n}(n):=X_{0}$ and $\bar{X}_{k+1}(n):=\bar{X}_{k}(n)+\nu(\bar{X}_{k}(n))+\xi_{k+1}$
for $k=-n,-n+1,\ldots,-1$. Now set $\tilde{X}_{n}:=\bar{X}_{0}(n)$. Note that the law of $\tilde{X}_{n}$ equals
that of $X_{n}$. 
Now, noticing $|\nu(x)|\leq \kappa |x|+|\nu(0)|$, one can estimate
$$
E[|X_{n+1}|]\leq \kappa E[|X_{n}|]+|\nu(0)|+E[|\xi_{n+1}|]=\kappa E[|X_{n}|]+|\nu(0)|+E[|\xi_{0}|],
$$
which leads to
$$
S:=\sup_{n\in\mathbb{N}}E[|X_{n}|]\leq E[|X_{0}|]+\frac{1}{1-\kappa}[|\nu(0)|+E[|\xi_{0}|]]<\infty.
$$
Note that for $n<m$ and $k=-n,\ldots,-1$,
$$
E[|\bar{X}_{k+1}(n)-\bar{X}_{k+1}(m)|]\leq E[|\nu(\bar{X}_{k}(n))-\nu(\bar{X}_{k}(m))|]\leq 
\kappa E[|\bar{X}_{k}(n)-\bar{X}_{k}(m)|]
$$
hence
\begin{eqnarray} \nonumber
W_{1}(X_{n},X_{m}) &\leq& E[|\tilde{X}_{n}-\tilde{X}_{m}|]\leq \kappa E[|\bar{X}_{-1}(n)-\bar{X}_{-1}(m)|]
\leq \cdots\\ 
\label{legfont} &\leq&{}
\kappa^{n}E[|\bar{X}_{-n}(n)-\bar{X}_{-n}(m)|]\leq 2\kappa^{n}S,
\end{eqnarray}
where we used that $X_{0}=\bar{X}_{-n}(n)$ and the law of $\bar{X}_{-n}(m)$ 
equals that of $X_{m-n}$.
Thus the sequence of the laws of ${X}_{n}$ is Cauchy in the complete metric $W_{1}$
and converges to some $\mu_{*}$. Since $W_{1}(\mathcal{L}({X}_{n}),\mu_{*})=\lim_{m\to\infty}W_{1}(X_{n},X_{m})$
we get the statement from \eqref{legfont}.
\end{proof}

\begin{proof}[Proof of Theorem \ref{lime}]
We check that the conditions of Lemma \ref{withrate} hold for $X_{n}$, $n\in\mathbb{N}$
with $\delta=1$ and $\gamma$ as in the statement of this theorem. The sequence
$X_{n}$ is bounded in $L^{1}$ by the argument of Proposition \ref{limd} above. 

From \eqref{kr}, $d_{FM}\leq W_{1}$. It remains to
estimate the tails of $\phi_{X_{n}}$. Notice that, by independence,
\begin{eqnarray}\nonumber
& & |\phi_{X_{n}}(u)|=|E[e^{i\langle u,X_{n}\rangle}]|\\
&=& \label{firi} |E[e^{i\langle u,X_{n-1}+\nu(X_{n-1})+\sum_{j=1}^{\infty}b_{j}\varepsilon_{n-j}\rangle}]
E[e^{i\langle u,b_{0}\varepsilon_{n}\rangle}]|\\
&\leq& \nonumber |\phi(b_{0}u)|\leq C_{\flat}(1+|u b_{0}|)^{-d-\gamma}\leq C_{\flat}\left(\frac{1}{|b_{0}|}+1
\right)^{d+\gamma}(1+|u|)^{-d-\gamma}	
\end{eqnarray}
since in \eqref{firi} the first expectation's absolute value $\leq 1$. The statement follows.  
\end{proof}

\begin{proof}[Proof of Theorem \ref{be}]
According to the unnumbered Theorem in Section 3 of \cite{yurinskii}, $d_{PR}(S_{n},\Phi)=O(n^{-1/2})$.
Here $d_{PR}$ refers to the Prokhorov metric
$$
d_{PR}(\mu,\nu):=\inf\{\epsilon>0: \mu(A)\leq \nu(A^{\epsilon})+\epsilon\mbox{ and }
\nu(A)\leq \mu(A^{\epsilon})+\epsilon\mbox{ for all }A\in\mathcal{B}(\mathbb{R}^{d})\}
$$
where $A^{\epsilon}:=\{x\in\mathbb{R}^{d}:\exists a\in A \mbox{ with }|a-x|<\epsilon\}$.
Now Corollary 2 of \cite{dudley} implies $d_{FM}(S_{n},\Phi)=O(n^{-1/2})$. 

It remains to show that the conditions
of Corollary \ref{fourier2} hold for the sequence $S_{n}$ (they are obvious for the standard Gaussian law). 
This is done in Lemmas \ref{boundy} and \ref{bundi} below.	
\end{proof}

\begin{lemma}\label{boundy}
If \eqref{momenti} holds then, for each integer $k\geq 1$,
$$
\mathcal{S}_{k}:=\sup_{n\geq 1}E[|S_{n}|^{2k}]<\infty.
$$
\end{lemma}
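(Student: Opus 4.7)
The plan is to reduce the vector-valued bound to the classical scalar Marcinkiewicz--Zygmund inequality via a coordinatewise decomposition. Writing $Y_j=(Y_j^{(1)},\dots,Y_j^{(d)})$ and $S_n^{(i)}=n^{-1/2}\sum_{j=1}^n Y_j^{(i)}$, the power-mean inequality gives
$$
|S_n|^{2k}=\Bigl(\sum_{i=1}^d (S_n^{(i)})^{2}\Bigr)^{k}\leq d^{k-1}\sum_{i=1}^d (S_n^{(i)})^{2k},
$$
so it suffices to bound $\sup_{n\geq 1} E[(S_n^{(i)})^{2k}]$ for each fixed coordinate $i$. Since $E[|Y_0^{(i)}|^{2k}]\leq E[|Y_0|^{2k}]<\infty$ by \eqref{momenti}, and the $Y_j^{(i)}$ are i.i.d.\ zero-mean real-valued, the scalar case is all that is needed.

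For the scalar step I would either cite Marcinkiewicz--Zygmund / Rosenthal directly, or prove it inline as follows. Expanding
$$
E\Bigl[\Bigl(\sum_{j=1}^n Y_j^{(i)}\Bigr)^{2k}\Bigr]=\sum_{j_1,\dots,j_{2k}=1}^n E\bigl[Y_{j_1}^{(i)}\cdots Y_{j_{2k}}^{(i)}\bigr],
$$
independence and zero-mean kill every term in which some index appears with multiplicity exactly one; hence the surviving index tuples use at most $k$ distinct values from $\{1,\dots,n\}$, giving at most $C_k n^k$ nonzero terms. Each surviving expectation is bounded in absolute value by $E[|Y_0^{(i)}|^{2k}]$ by H\"older, so
$$
E\Bigl[\Bigl(\sum_{j=1}^n Y_j^{(i)}\Bigr)^{2k}\Bigr]\leq C_k\, n^k\, E[|Y_0^{(i)}|^{2k}].
$$
Dividing by $n^k$ yields $E[(S_n^{(i)})^{2k}]\leq C_k E[|Y_0|^{2k}]$ uniformly in $n$, and combining with the coordinatewise bound gives $\mathcal{S}_k\leq d^k C_k E[|Y_0|^{2k}]<\infty$.

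The only real point that requires care is the combinatorial counting behind the $n^k$ scaling; the use of zero-mean (which forces every index to appear at least twice, hence at most $k$ distinct indices) is what turns the naive $n^{2k}$ bound into the required $n^k$. Everything else is bookkeeping: the centering hypothesis on $Y_0$ and finiteness of all moments from Assumption \ref{forclt} feed directly into the argument, and no property of the covariance structure beyond integrability is used.
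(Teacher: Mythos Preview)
Your proof is correct and follows essentially the same approach as the paper: reduce to the scalar case coordinatewise, then invoke the Marcinkiewicz--Zygmund inequality (or, equivalently, the direct moment expansion you sketch) to obtain the uniform bound $E[(S_n^{(i)})^{2k}]\leq C_k E[|Y_0|^{2k}]$. The paper applies Marcinkiewicz--Zygmund to pass to $n^{-k}E[(\sum_j Y_j^2)^k]$ and then bounds the $n^k$ resulting terms individually, whereas your inline alternative expands the $2k$-th power directly and uses the zero-mean condition to count surviving terms---but these are two presentations of the same idea.
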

\begin{proof} It suffices to show this coordinatewise so
we assume $d=1$.
The Mar\-czin\-ki\-e\-wicz--Zygmund inequality implies that, for some constant $C_{k}$,
$$
E[|S_{n}|^{2k}]\leq C_{k}E \left[\left(\sum_{j=1}^{n} \frac{Y^{2}_{j}}{n}\right)^{k}\right]=
\frac{C_{k}}{n^{k}}E \left[\left(\sum_{j=1}^{n} Y^{2}_{j}\right)^{k}\right].
$$
The latter expectation is a sum of $n^{k}$ terms of the form $E[Y_{j_{1}}^{2}\cdots Y_{j_{k}}^{2}]$ with some
indices $j_{1},\ldots,j_{k}$. Since $E[|Y_{1}|^{2k}]<\infty$, clearly each such term is smaller than some
fixed constant $D_{k}$. Hence $E[|S_{n}|^{2k}]\leq {C_{k}}{n^{-k}}D_{k}n^{k}\leq C_{k}D_{k}$, for each $n$.
\end{proof}

\begin{lemma}\label{bundi} If \eqref{smoothie} holds then
for each integer $l\geq 1$, there are $N(l),H_{l}>0$ such that
\begin{equation}\label{valami}
|\phi_{S_{n}}(u)|\leq H_{l}(1+|u|)^{-l},\ u\in\mathbb{R}^{d},\ n\geq N(l).
\end{equation}
\end{lemma}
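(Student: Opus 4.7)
The plan is to use the factorization $\phi_{S_{n}}(u) = \phi(u/\sqrt{n})^{n}$ (immediate from independence and scaling) and bound each factor differently depending on whether the argument $u/\sqrt{n}$ is small or large. For this I first establish two pointwise estimates on $\phi$: (i) since all moments of $Y_{0}$ are finite, $E[Y_{0}] = 0$ and the covariance equals $I$, a Taylor expansion gives $\phi(v) = 1 - |v|^{2}/2 + O(|v|^{3})$, so there exist $r_{0}, c_{0} > 0$ with $|\phi(v)| \leq e^{-c_{0}|v|^{2}}$ for $|v| \leq r_{0}$; and (ii) by \eqref{smoothie}, $|\phi(v)| \to 0$ as $|v| \to \infty$, which forces $|\phi(v)| < 1$ for every $v \neq 0$---if $|\phi(v_{0})| = 1$ for some $v_{0} \neq 0$, then $\langle v_{0}, Y_{0}\rangle$ is concentrated on an arithmetic progression and consequently $|\phi(m v_{0})| = 1$ for every integer $m$, contradicting the decay at infinity. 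Combined with continuity of $\phi$, this yields $\rho := \sup_{|v| \geq r_{0}} |\phi(v)| < 1$.

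Next I would split the range of $u$. In the low-frequency region $|u| \leq r_{0} \sqrt{n}$, the estimate in (i) gives $|\phi_{S_{n}}(u)| \leq e^{-c_{0} |u|^{2}}$, and since a Gaussian dominates any polynomial, this is at most $H'_{l}(1+|u|)^{-l}$ for a constant $H'_{l}$ depending only on $c_{0}$, $l$ and $d$. In the high-frequency region $|u| > r_{0} \sqrt{n}$, write $v := u/\sqrt{n}$ (so $|v| > r_{0}$), fix $k := \lceil l/\alpha \rceil$, and split
$$
|\phi_{S_{n}}(u)| = |\phi(v)|^{n-k} |\phi(v)|^{k} \leq \rho^{n-k} C_{\sharp}^{k} (1+|v|)^{-\alpha k} \leq \rho^{n-k} C_{\sharp}^{k} (1+|u|/\sqrt{n})^{-l},
$$
where the last step uses $\alpha k \geq l$. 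Undoing the scaling via the elementary bound $1+|u| \leq \sqrt{n}(1+|u|/\sqrt{n})$ (valid for $n \geq 1$) converts this to
$$
|\phi_{S_{n}}(u)| \leq \rho^{n-k} C_{\sharp}^{k} n^{l/2} (1+|u|)^{-l}.
$$
Since $\rho < 1$ and $k$ is fixed, the prefactor tends to $0$ as $n \to \infty$ and thus lies below $H'_{l}$ for all $n$ beyond some threshold $N(l)$.

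Setting $H_{l} := H'_{l}$ and choosing $N(l)$ as above gives \eqref{valami} in both regions simultaneously. The main technical point is the strict inequality $|\phi(v)| < 1$ for all $v \neq 0$, which is what lets me control the product $|\phi(v)|^{n-k}$ uniformly on $\{|v| \geq r_{0}\}$; once this is in place, the rest is bookkeeping built on the crude rescaling $1+|u| \leq \sqrt{n}(1+|u|/\sqrt{n})$ and the fact that a Gaussian beats any polynomial.
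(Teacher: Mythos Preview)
Your proof is correct and shares the same core ingredients as the paper: the factorisation $\phi_{S_n}(u)=\phi(u/\sqrt{n})^n$, the Taylor bound $|\phi(v)|\le e^{-c_0|v|^2}$ near the origin, and the crucial fact that $\rho:=\sup_{|v|\ge r_0}|\phi(v)|<1$ (which both you and the paper deduce from \eqref{smoothie} via the lattice-distribution argument). The paper, however, splits the range of $u$ into \emph{three} regions: small ($|u|\le c_2\sqrt{n}$), moderate ($c_2\sqrt{n}\le |u|\le Jn$) and large ($|u|\ge Jn$). In the moderate zone it uses only $|\phi_{S_n}(u)|\le\rho^n$ and checks $\rho^n\le(1+Jn)^{-l}$ for large $n$; in the large zone it applies \eqref{smoothie} to all $n$ factors. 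Your device of writing $|\phi(v)|^n=|\phi(v)|^{n-k}|\phi(v)|^k$ with $k=\lceil l/\alpha\rceil$ and using $\rho$ on the first block and \eqref{smoothie} on the second collapses these two cases into one, at the modest cost of the rescaling inequality $1+|u|\le\sqrt{n}(1+|u|/\sqrt{n})$. This is a neat economy: you avoid having to identify a separate threshold $J$ and to argue separately that $\rho^n$ beats $(1+Jn)^{-l}$.
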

\begin{proof}
Notice that $\phi_{S_{n}}(u)=\phi^{n}(u/\sqrt{n})$. We will present three separate arguments
depending on how large $u$ is. Fix an integer $l\geq 1$.

\smallskip

\noindent\textsl{Case 1: $|u|$ small.} As $Y_{0}$ has zero mean and unit covariance matrix and $\phi$ is smooth
by \eqref{momenti}, we have $\phi(0)=1$, $\phi'(0)=0$, $\phi''(0)=-\mathrm{Id}_{\mathbb{R}^{d}}$ 
and there is $0<c_{1}\leq 1$ such that 
$\phi(x)=1-|x|^{2}/2+O(|x|^{3})$ for $|x|\leq c_{1}$. Choosing an even smaller $c_{1}$ we may thus
guarantee that $|\phi(x)|\leq 1-|x|^{2}/3$ for all $|x|\leq c_{1}$. Recalling the Taylor expansion
of the logarithm, we may estimate 
$$
\left(1-\frac{|x|^{2}}{3}\right)\leq e^{-|x|^{2}/4}
$$
for all $|x|\leq c_{2}$, with $c_{2}\leq c_{1}$ small enough, hence
$$
|\phi_{S_{n}}(u)|\leq \left(1-\frac{|u|^{2}}{3n}\right)^{n}\leq e^{-|u|^{2}/4}
$$
for all $n$ and for all $|u|\leq c_{2}\sqrt{n}$.

Now notice that $(1+x)^{l}\leq l!e^{x+1}\leq l! e^{\frac{x^{2}}{4} +3}$ for all $x\geq 0$,
hence also 
$$
|\phi_{S_{n}}(u)|\leq \frac{e^{3}l!}{(1+|u|)^{l}},
$$
showing \eqref{valami} for $|u|\leq c_{2}\sqrt{n}$.

\smallskip{}

\noindent\textsl{Case 2: $|u|$ large.} We may and will assume $C_{\sharp}\geq 1$. By \eqref{smoothie}, one has 
\begin{equation}\label{tavol}
|\phi(x)|\leq C_{\sharp}|x|^{-\alpha},\ x\in\mathbb{R}^{d}.	
\end{equation} 
Let $n\geq \hat{N}(l):=2l/\alpha$. 
Set $J:=C_{\sharp}^{2/\alpha}$. Notice that for $|u|\geq Jn$, one has
$$
|u|\geq n^{\frac{\alpha n}{2(\alpha n -l)}}C_{\sharp}^{\frac{n}{\alpha n-l}}
$$
but then, since \eqref{tavol} holds,
$$
|\phi_{S_{n}}(u)|\leq \frac{(\sqrt{n})^{\alpha n}C_{\sharp}^{n}}{|u|^{\alpha n} }\leq \frac{1}{|u|^{l}}.
$$
This implies 
$$
|\phi_{S_{n}}(u)|\leq \frac{2^{l}}{(1+|u|)^{l}}
$$
since $|u|\geq Jn\geq 1$.

\smallskip{}

\noindent\textsl{Case 3: $|u|$ moderate.}
Apply Lemma \ref{sublemma} below
with the choice $c:=c_{2}$. We obtain $\rho$ such that $|\phi_{S_{n}}(u)|\leq \rho^{n}$
for all $|u|\geq c_{2}\sqrt{n}$. 
It is clear that $\rho^{n}\leq (1+Jn)^{-l}$ holds for $n\geq \tilde{N}$ with some 
$\tilde{N}=\tilde{N}(l)$.
This shows \eqref{valami} for all $c_{2}\sqrt{n}\leq |u|\leq Jn$.

\smallskip

We have thus verified \eqref{valami} for all $l\geq 1$, $n\geq N(l):=\max\{\tilde{N}(l),\hat{N}(l)\}$ 
and $u\in\mathbb{R}^{d}$ and may conclude. 
\end{proof} 

\begin{lemma}\label{sublemma} Let \eqref{smoothie} hold. Then for each $c>0$,
there is $\rho=\rho(c)<1$ satisfying $\sup_{|u|\geq c}|\phi(u)|\leq \rho$.	
\end{lemma}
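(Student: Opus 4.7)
The goal is to show that $|\phi|$ is uniformly bounded away from $1$ on $\{|u|\geq c\}$. My plan is to split this region into a compact shell and a far tail, handle the tail directly via \eqref{smoothie}, and then argue on the shell by continuity plus a short lattice-type argument.

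First, pick $R\geq c$ so large that $C_{\sharp}(1+R)^{-\alpha}\leq 1/2$; such an $R$ exists by \eqref{smoothie}. Then $\sup_{|u|\geq R}|\phi(u)|\leq 1/2$. On the compact annulus $K:=\{u\in\mathbb{R}^{d}:\ c\leq |u|\leq R\}$ the continuous function $|\phi|$ attains its maximum, say $m:=\max_{u\in K}|\phi(u)|$, at some $u^{*}\in K$; note $u^{*}\neq 0$. Setting $\rho:=\max\{m,1/2\}$ will complete the proof as soon as we know $m<1$.

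The main point is therefore to show that $|\phi(u)|<1$ for every $u\neq 0$. Suppose for contradiction that $|\phi(u_{0})|=1$ for some $u_{0}\neq 0$, and write $\phi(u_{0})=e^{i\theta}$. Then $E[\cos(\langle u_{0},Y_{0}\rangle-\theta)]=1$, which forces $\langle u_{0},Y_{0}\rangle\in \theta+2\pi\mathbb{Z}$ almost surely. Consequently, for every integer $k$, $\langle k u_{0},Y_{0}\rangle=k\langle u_{0},Y_{0}\rangle\in k\theta+2\pi\mathbb{Z}$ almost surely, and so $|\phi(ku_{0})|=|E[e^{ik\langle u_{0},Y_{0}\rangle}]|=1$ for all $k\in\mathbb{Z}$. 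Letting $|k|\to\infty$ contradicts \eqref{smoothie}, which forces $|\phi(ku_{0})|\leq C_{\sharp}(1+|k|\,|u_{0}|)^{-\alpha}\to 0$. This contradiction gives $m<1$, and then $\rho:=\max\{m,1/2\}<1$ is the required constant.

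The only mildly nontrivial step is the standard deduction that $|\phi(u_{0})|=1$ implies $Y_{0}$ is concentrated on the union of hyperplanes $\{x:\ \langle u_{0},x\rangle\in\theta+2\pi\mathbb{Z}\}$; once that is in hand, everything else is continuity on a compact set together with the explicit decay from \eqref{smoothie}.
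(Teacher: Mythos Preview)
Your proof is correct and follows essentially the same strategy as the paper: split $\{|u|\ge c\}$ into a far tail controlled by \eqref{smoothie} and a compact annulus where $|\phi|$ attains its maximum, then rule out $|\phi(u^{*})|=1$ via the lattice observation that this would force $|\phi(ku^{*})|=1$ for all integers $k$, contradicting the decay. The only difference is cosmetic: the paper invokes Lukacs' characterization of lattice distributions and periodicity of $|\phi_{U}|$, whereas you derive the key fact $\langle u_{0},Y_{0}\rangle\in\theta+2\pi\mathbb{Z}$ a.s.\ directly from $E[\cos(\langle u_{0},Y_{0}\rangle-\theta)]=1$, which is arguably cleaner.
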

\begin{proof}
$\phi(u)\to 0$ as $u\to\infty$. Hence $\sup_{|u|\geq K}|\phi(u)|\leq 1/2$
for some $K>0$. $|\phi|$ being continuous, it achieves its maximum on the compact set 
$\{u:c\leq |u|\leq K\}$ at some point $u^{*}\neq 0$. We show that $|\phi(u^{*})|<1$.
By contradiction, in the opposite case the one-dimensional random variable $U:=\langle u^{*},Y_{0}\rangle${}
would satisfy $|\phi_{U}(1)|=|\phi(u^{*})|=1$. By Theorem 2.1.4 of
\cite{lukacs}, $U$ should have lattice distribution. As such, $|\phi_{U}|$ would be periodic but
this contradicts $\lim_{t\to\infty}\phi_{U}(t)=\lim_{t\to\infty}\phi(tu^{*})=0$. 
\end{proof}

\begin{proof}[Proof of Proposition \ref{malliavin-estimate}] First let us fix $i$ and apply Theorem 2.3.1 of \cite{bally} to $F_{i}$ with the
choice $p=2d$, $k=2(d-1)$ and $a=1/4d$ ($p,a,k$ here refer to the notation in \cite{bally}). Equation (2.88) of 
\cite{bally} implies
that for a suitable constant $C_{\natural}{}$ and for some $p'\geq 1$,
\begin{equation}\label{elsofontos}
|\partial_{\alpha} f_{i}(x)|\leq C_{\natural}\left[\left(1+\Vert\mathrm{det}(\sigma_{i}^{-1})\Vert_{p'}\right)^{q+2}
\left(1+\Vert F_{i}\Vert_{q+2,p'}\right)^{2d(q+2)}\right]^{q+1+k}
\end{equation}
holds for all $x$. Equation (2.89) of \cite{bally} implies that also
\begin{equation}\label{masikfontos}
|\partial_{\alpha} f_{i}(x)|\leq C_{\natural}\left[\left(1+\Vert \mathrm{det}(\sigma_{i}^{-1})\Vert_{p'}\right)^{q+2}
\left(1+\Vert F_{i}\Vert_{q+2,p'}\right)^{2d(q+2)}\right]^{q+1+k}\frac{\Vert F_{i}\Vert_{4dl}^{l}}{(|x|-2)^{l}}
\end{equation}
for all $|x|>2$. 
Taking supremum in $\alpha,x,i$ in \eqref{elsofontos} and \eqref{masikfontos} combined
with the hypotheses $C_{p'}^{\sigma},C_{p'}^{F}<\infty$ implies our statement since 
$(|x|-2)^{-l}\leq 4(1+|x|)^{-l}$, $|x|\geq 3$ and $(1+|x|)^{-l}\leq 4^{-l}$, $|x|<3$. 
\end{proof}

\begin{proof}[Proof of Theorem \ref{smoothsequence1}]
Proposition \ref{malliavin-estimate} and Lemma \ref{majd} below imply that the conditions
of Corollary \ref{fourier2} hold. Now the statements follow from that Corollary. Remark 
\ref{romi} follows from
Remarks \ref{DK} and \ref{cf} above.	
\end{proof}

\begin{lemma}\label{majd} Let the assumptions of Proposition \ref{malliavin-estimate} be in vigour.
Let $\phi_{i}$ denote the characteristic function of $F_{i}$ for $i\in I$. Then for each $l\geq 1$ there are constants $H_{l}$
such that for all $i\in I$,
\begin{equation}\label{dedi}
|\phi_{i}(u)|\leq H_{l}(1+|u|)^{-l},\ u\in\mathbb{R}^{d}.
\end{equation}
\end{lemma}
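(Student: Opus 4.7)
The plan is to deduce the characteristic function decay from rapid decay of derivatives of the density supplied by Proposition \ref{malliavin-estimate}, using the classical Fourier principle that smoothness plus spatial decay of $f_i$ translates into polynomial decay of $\phi_i$.

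Fix $l \geq 1$. I would first invoke Proposition \ref{malliavin-estimate} with $q=l$ and with the decay exponent $d+1$. This gives a constant $C_{l,d+1}^F$, uniform in $i \in I$, such that
$$|\partial_\alpha f_i(x)| \leq C_{l,d+1}^F (1+|x|)^{-d-1}, \qquad x \in \mathbb{R}^d,$$
for every multi-index $\alpha \in \mathbb{N}^d$ with $|\alpha| \leq l$. In particular, each pure partial derivative $\partial_j^l f_i$ (for $j=1,\ldots,d$) is integrable on $\mathbb{R}^d$, with
$$\int_{\mathbb{R}^d} |\partial_j^l f_i(x)|\, dx \leq C_{l,d+1}^F \int_{\mathbb{R}^d} (1+|x|)^{-d-1} dx =: K_l < \infty,$$
uniformly in $i$ and $j$.

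Next I would use integration by parts. Since $f_i$ is continuously differentiable up to order $l$ and $f_i$ together with all derivatives of order $\leq l$ decay faster than $(1+|x|)^{-d-1}$, the boundary terms vanish and one obtains for each coordinate $j$
$$(-iu_j)^l \phi_i(u) = \int_{\mathbb{R}^d} e^{i\langle u,x\rangle}\, \partial_j^l f_i(x)\, dx.$$
Taking absolute values yields $|u_j|^l |\phi_i(u)| \leq K_l$ for all $i \in I$, $u \in \mathbb{R}^d$ and $j=1,\ldots,d$.

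To convert this into a bound in terms of $|u|$, I would pick $j^\ast$ maximizing $|u_j|$, for which $|u_{j^\ast}| \geq |u|/\sqrt{d}$. This gives $|u|^l |\phi_i(u)| \leq d^{l/2} K_l$. Combining this with the trivial bound $|\phi_i(u)| \leq 1$ yields \eqref{dedi} with $H_l := 2^l \max(1, d^{l/2} K_l)$ (or any similar constant), concluding the proof. I do not expect a genuine obstacle here: the only point requiring a little care is the justification of the integration by parts, which is immediate from the rapid decay estimates provided by Proposition \ref{malliavin-estimate}.
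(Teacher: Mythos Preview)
Your proof is correct and follows essentially the same route as the paper: both arguments exploit the Fourier duality between differentiation of the density and multiplication of the characteristic function by monomials in $u$, together with the uniform $L^{1}$ bounds on $\partial_{\alpha}f_{i}$ supplied by Proposition~\ref{malliavin-estimate}. The paper phrases this via a reference to a theorem in Stein--Weiss (verifying $L^{1}$ convergence of difference quotients by dominated convergence), whereas you carry out the integration by parts directly; the substance is the same. One small remark: your invocation of Proposition~\ref{malliavin-estimate} ``with $q=l$'' tacitly assumes its hypotheses are available for every $q$, which is exactly how the paper reads the phrase ``let the assumptions of Proposition~\ref{malliavin-estimate} be in vigour'' (and is clear from the application in Theorem~\ref{smoothsequence1}).
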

\begin{proof} For a function $g\in L^{1}$ we define its Fourier transform as $$
\hat{g}(t):=\int_{\mathbb{R}^{d}}g(x)e^{i\langle t,x\rangle}\, dx,\ t\in\mathbb{R}^{d}.
$$
Note that $\hat{f}_{i}=\phi_{i}$.
We will show that, for each multiindex $\alpha$, 
$$
(-i)^{|\alpha|}\phi_{i}(u)u_{1}^{\alpha_{1}}\ldots u_{d}^{\alpha_{d}}
$$ 
is the Fourier-transform of $\partial_{\alpha}f_{i}$. Since this is
is bounded by the $L_{1}$-norm of $\partial_{\alpha}f_{i}$, and the latter form a 
bounded family in $L^{1}$ by Proposition \ref{malliavin-estimate},
we may easily deduce \eqref{dedi}.
  
We will only deal with the first order derivative $\partial_{x_{1}}f_{i}$ since the general argument is analogous. Dropping
the index $i$ we simply write $f$.

Fix $x\in\mathbb{R}^{d}$ for the moment. Consider the expression $$\ell(x,h):=[f(x+he_{1})-f(x)]/h,$$ where $e_{1}=(1,0,\ldots,0)$, $h>0$. By the 
mean-value theorem, there are numbers $\xi_{x,h}\in [0,h]$ such that $\ell(x,h)=\partial_{x_{1}}f(x+\xi_{x,h}e_{1})$. We wish to apply
Theorem 1.8 of \cite{fourier} hence we need to verify
\begin{equation}\label{ros}
\int_{\mathbb{R}^{d}}|\ell(x,h)-\partial_{x_{1}}f(x)|\, dx\to 0\mbox{ as }h\to 0.
\end{equation}
From \eqref{kelli}, the integrand is dominated by
\begin{equation}\label{jo}
C_{1,d+1}^{F}[(1+|x+\xi_{x,h}e_{1}|)^{-d-1}+(1+|x|)^{-d-1}],
\end{equation}
where the second term is clearly integrable. Let $h<1/2$ hold, then
$$
1+|x+\xi_{x,h}e_{1}|\geq \frac{1+|x|}{2}
$$
for all $x\in\mathbb{R}^{d}$, 
which guarantees that the first term in \eqref{jo} is also integrable, hence dominated convergence works for \eqref{ros}.
\end{proof}
 
\begin{proof}[Proof of Lemma \ref{exx}]
Denote $A:=d_{FM}(X_{1},X_{2})$. First consider the case $A\leq e^{-r}$.
Using the inverse Fourier transform and Cauchy's inequality, we may estimate, as in Lemma \ref{withrate},
for an arbitrary $M\geq 1$,
\begin{eqnarray*}
& & |f_{1}(x)-f_{2}(x)|\leq \int_{\mathbb{R}^{d}}|\phi_{1}(u)-\phi_{2}(u)|\, du\\
&\leq& \int_{|u|\leq M}	4(1+|u|)A\, du+\int_{|u|>M}[|\phi_{1}(u)|+|\phi_{2}(u)|]\, du\\
&\leq& 4(M+1)A (2M)^{d}+ \int_{|u|>M}[|\phi_{1}(u)|+|\phi_{2}(u)|]e^{r|u|/2}e^{-r|u|/2}\, du\\
&\leq& 8M 2^{d}M^{d}A+\left({}
\int_{\mathbb{R}^{d}}2[|\phi_{1}(u)|^{2}+|\phi_{2}(u)|^{2}]e^{r|u|}\, du
\right)^{1/2}\left({}
\int_{|u|>M}e^{-r|u|}\, du
\right)^{1/2}\\
&\leq& C'\left(M^{d+1}A+\left[\int_{M}^{\infty}e^{-ry}y^{d-1}\, dy\right]^{1/2}\right)\\
&\leq& C''\left(M^{d+1}A+M^{(d-1)/2}e^{-rM/2}\right)
\end{eqnarray*}	
for suitable constants $C',C''$.
Choose $M:=2|\ln A|/r$ to obtain
\begin{equation}\label{aura}
|f_{1}(x)-f_{2}(x)|\leq C'''A|\ln A|^{d+1}
\end{equation}
with a suitable $C'''$. 
For any $M\geq 1$, we may estimate, using Markov's inequality,
\begin{eqnarray*}
& & \int_{\mathbb{R}^{d}}|f_{1}(x)-f_{2}(x)|\, dx\\
&\leq& \int_{|x|\leq M}C'''A |\ln A|^{d+1}\, dx +\int_{|x|>M} [f_{1}(x)+f_{2}(x)]\, dx\\
&\leq& C'''2^{d}M^{d}A |\ln A|^{d+1}+\left[E[e^{r|X_{1}|}+e^{r|X_{2}|}]\right]e^{-rM}\\
&\leq& C'''2^{d}M^{d}A |\ln A|^{d+1}+2C_{r}e^{-rM}.	
\end{eqnarray*} 
Choosing $M:=|\ln A|/r$, we arrive at
$$
d_{TV}(X_{1},X_{2})\leq C''''A|\ln A|^{2d+1}
$$
with a suitable constant $C''''$. Finally, if $A\geq e^{-r}$ then
$$
d_{TV}(X_{1},X_{2})\leq 2\leq 2Ae^{r},
$$
so we may set $G:=C''''+2e^{r}$ and \eqref{otto} follows. \eqref{toto} comes from \eqref{aura} in the case $A< e^{-r}$.
In the alternative case, note that, $\phi_{1},\phi_{2}$ being integrable, 
their inverse Fourier-transforms $f_{1},f_{2}$ are bounded by a constant depending on $d$ and $C_{r}$ only.
\end{proof}




\begin{thebibliography}{10}





\bibitem{bc1}
V. Bally and L. Caramellino.
\newblock On the distances between probability density functions. 
\newblock \emph{Electron.
J. Probab.}, 19, no. 110, 1--33, 2014.

\bibitem{bally} V. Bally and L. Caramellino.
\newblock Integration by parts formulas, Malliavin calculus and
regularity of probability laws.
\newblock \emph{In: V. Bally, L. Caramellino and R. Cont: Stochastic integration
by parts and functional It\^{o} calculus}, 1--114, Birkh\"auser, 2016.

\bibitem{bc2} V. Bally, L. Caramellino and G. Poly.
\newblock Regularization lemmas and convergence in total variation. 
\newblock \emph{Electron.
J. Probab.}, 25, no. 74, 1--20, 2020.

\bibitem{6} M. Barkhagen, N. H. Chau, \'E. Moulines, M. R\'asonyi, S. Sabanis and Y. Zhang.
\newblock On stochastic gradient Langevin dynamics with dependent data streams in the logconcave case.
\newblock \emph{Bernoulli}, 27:1--33, 2021.

\bibitem{bobkov}
S. G. Bobkov.
\newblock Proximity of probability distributions
in terms of Fourier--Stieltjes transforms.
\newblock \emph{Russian Math. Surveys}, 71:1021--1079, 2016.

\bibitem{bcg}
S. G. Bobkov, G. P. Chistyakov and F. G\"otze. 
\newblock Berry-Esseen bounds in the entropic central limit theorem. 
\newblock \emph{Probab. Theory Relat. Fields}, 159:435--478, 2014.

\bibitem{bobkov-goetze}
S. G. Bobkov and F. G\"otze.
\newblock Berry-Esseen bounds in local limit theorems.
\newblock \emph{Preprint, arXiv:2407.20744}, 2024.

\bibitem{borovkov-foss}
A. A. Borovkov, S. G. Foss. \newblock Stochastically recursive sequences and their
generalizations. \newblock \emph{Limit theorems for random processes and their applications} (in Russian), 
20:32--103, 1993.
 

\bibitem{borovkov} A. A. Borovkov. 
\newblock \emph{Ergodicity and Stability of Stochastic Processes.}
\newblock Wiley, Chichester, 1998. 

\bibitem{che} M. Chae and S.G. Walker. 
\newblock Wasserstein upper bounds of the total variation for smooth densities.
\newblock \emph{Statistics and Probability Letters}, 163, 108771, 2020.


\bibitem{dudley} R. M. Dudley. \newblock Distances of probability measures and random variables.
\newblock \emph{Ann. Math. Stat.}, 39:1563--1572, 1968.

\bibitem{elton}
J. H. Elton. \newblock A multiplicative ergodic theorem for Lipschitz maps.
\newblock {\em Stochastic Processes and its Applications}, 34:39--47, 1990.

\bibitem{paolo-fad} P. Guasoni.
\newblock Asymmetric Information in Fads Models.
\newblock \emph{Finance and Stochastics}, 10:159--177, 2006.

\bibitem{gyorfi_laci} L. Gy\"orfi, A. Lovas and M. R\'asonyi.{}
\newblock On the strong stability of ergodic iterations.
\newblock \emph{Journal of Applied Probability}, vol. 62, no. 1, 2025.

\bibitem{hairer} M. Hairer. 
\newblock Ergodicity of stochastic differential equations driven by fractional Brownian motion.
\newblock \emph{Annals of
Probability}, 32:703--758, 2005.

\bibitem{lukacs} E. Lukacs.
\newblock \emph{Characteristic functions. 2nd edition.}
\newblock Griffin, London, 1970. 

\bibitem{red} A. Morr, D. Kreher and N. Boers. 
\newblock Red noise in continuous-time stochastic modelling.
\newblock \emph{Royal Society Open Science}, 12(8), 250573, 2025.

\bibitem{nourdin-peccati} I. Nourdin and G. Peccati.
\newblock \emph{Normal approximations with Malliavin calculus: from Stein's method to
universality.}
\newblock Cambridge Tracts in Mathematics, vol. 192,
\newblock Cambridge University Press, 2012.

\bibitem{nualart} D. Nualart.
\newblock \emph{The Malliavin calculus and related topics.} 
\newblock 2nd edition, Springer, 2006.

\bibitem{pw} R. C. Paley and N. Wiener.
\newblock \emph{Fourier transforms in the complex domain.}
\newblock American Mathematical Society, New York, 1934.

\bibitem{fourier} E. M. Stein and G. Weiss.
\newblock \emph{Introduction to Fourier analysis on Euclidean spaces.}
\newblock Princeton University Press, 1971.

\bibitem{varvenne} M. Varvenne.
\newblock Rate of convergence to equilibrium for discrete-time
stochastic dynamics with memory.
\newblock \emph{Bernoulli}, 25:3234--3275, 2019.

\bibitem{villani}
C. Villani.
\newblock \emph{Optimal transport: old and new.}
\newblock Springer, Berlin, 2009.


\bibitem{yurinskii}
V. V. Yurinskii.
\newblock A smoothing inequality for estimates of the Levy--Prokhorov distance. 
\newblock \emph{Theory Probab. Appl.}, 20:1--10, 1975.






\end{thebibliography}
\end{document}